\begin{document}

\begin{frontmatter}

\title{Analysis of a new stabilized discontinuous Galerkin method for the reaction-diffusion  problem with discontinuous coefficient\tnoteref{1}}\tnotetext[1]{The
work is supported by the Natural Science Foundation of China(No. 10901047).\\
 Email: zhihaoge@henu.edu.cn, fax:+86-378-3881696.}
\author{Zhihao G${\rm e^{1,2}}$,\ Jiwei Ca${\rm o^1}$ }
\address{$ ^1$School of Mathematics and Statistics,
 Henan University, Kaifeng 475004, P.R. China\\
$ ^2$Institute of Applied Mathematics,
 Henan University, Kaifeng 475004, P.R. China}

\begin{abstract}
In this paper, a new stabilized discontinuous Galerkin method within
a new function space setting is introduced, which involves an extra
stabilization term on the normal fluxes across the element interfaces.
It is different from the general DG methods. The formulation satisfies
a local conservation property and we prove well posedness of the new
formulation by Inf-Sup condition. A priori error estimates are derived,
which are verified by a 2D experiment on a reaction-diffusion type model problem.
\end{abstract}

\begin{keyword}
DG methods, Error estimation, Inf-Sup condition.
\end{keyword}
\end{frontmatter}
\thispagestyle{empty}

\newtheorem{thm}{Theorem}[section]
\newtheorem{lem}{Lemma}[section]
\newtheorem{cor}{Corollary}[section]
\newtheorem{prop}{Proposition}[section]
\newtheorem{remark}{Remark}[section]
\renewcommand{\theequation}{\thesection.\arabic{equation}}
\numberwithin{equation}{section}

\section{Introduction}
\setcounter{equation}{0}

In 1973, Reed and Hill \cite{[7]W.H.Reed} introduced the first
discontinuous Galerkin (DG) method for hyperbolic equations,
and since that time there has been an active development of
DG methods for hyperbolic and nearly hyperbolic problems,
resulting in a variety of different methods. Also in the 1970's,
but independently, Galerkin methods for elliptic and parabolic
equations using discontinuous finite elements were proposed by
Babuska, Baker, Douglas and Dupont \cite{[10]J.Douglas}, and a
number of variants introduced and studied such as \cite{[11]D.N.Arnold,[12]D.N.ArnoldandB.Cockburn,[14]F.BrezziandG.Manzini,[15]M.F.Wheeler,[17]P.CastilloandB.Cockburn}  and so on. These were generally
called interior penalty (IP) methods \cite{[8]I.Babuska,[9]I.BabuskaandM.Zlamal} and their development remained
independent of the development of the DG methods for hyperbolic
equations. In 1997, Bassi and Rebay \cite{[19]F.BassiandS.Rebay}
introduced a DG method for the Navier-Stokes equations and in 1998,
Cockburn and Shu \cite{[20]B.CockburnandC.W.Shu} introduced
the local discontinuous Galerkin (LDG) methods. Around the same
time, Oden and Bauman \cite{[21]J.OdenandC.E.Baumann} introduced
another DG method for some diffusion problems. Their approach uses
a non-symmetric bilinear form, even for symmetric problems,
analogous to the one obtained from Nitsche's penalty form by
reversing the sign of the symmetrization term, as discussed earlier.

In this work, we introduce a new stabilized discontinuous Galerkin method within a new function space setting,
which involves an extra stabilization term on the normal fluxes across the element interfaces.
It is different from the general DG methods introduced by
Oden, Babuska and Baumann \cite{[8]I.Babuska, [21]J.OdenandC.E.Baumann}.
The formulation satisfies a local conservation property and we prove well posedness of the
new formulation by proving and using Inf-Sup condition.
A priori error estimates are proved that satisfy the optimal in $h$ and suboptimal
in $p$.

The paper is organized as follows. In Section 2, we introduce the model problem
and the notations. In the same section, the weak formulation of the model problem,
which includes an extra stabilization term on the inter-element jumps of the
fluxes and satisfies a local conservation property, is also described. Then
the discrete problem is given. In Section 3, we investigate the Inf-Sup condition in the
case of discrete. We derive a priori error estimates to analyze the rates of convergence of the
method in Section 4. A 2-dimensional numerical experiments to test our analysis is given
in Section 5. Finally, concluding remarks are summarized in Section 6.

\section{A new stabilized discontinuous Galerkin method }

\subsection{Model problem and notations}
Let $\Omega \subset \mathbb{R}^2$ be a bounded open domain with Lipschitz boundary
$\partial \Omega$ and let $\{P_h\}$ be a family of regular partitions of $\Omega$
into open elements $E$, such that
\begin{eqnarray}
\Omega = \textrm{int} \Bigg( \bigcup\limits_{E \in P_h} \bar{E} \Bigg)
\end{eqnarray}

\begin{figure}[H]
\centering
\includegraphics[width=6.5cm]{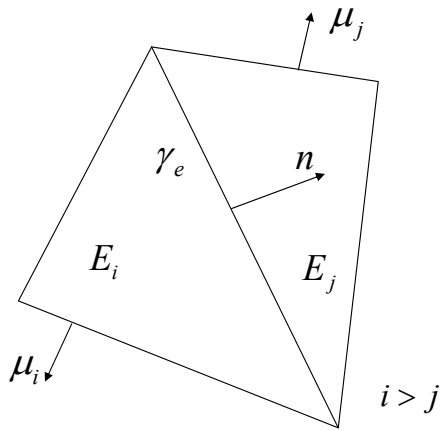}
\caption{Geometrical definitions of neighbouring elements.}\label{figdoc666}
\end{figure}

The following notations will be used in our further considerations. We set
$h_E=\textrm{diam}(E), h=\max_{E \in P_h}h_E$. The set of all edges of the
partition $P_h$ is given by $\varepsilon_h = \{\gamma_h\}, k=1,\ldots,N_{edge}$,
where $N_{edge}$ reprents the number of edges in the partition $P_h$. The interior
interface $\Gamma_{\textrm{int}}$ is then defined as the union of all common edges shared
by elements of partition $P_h$
\begin{eqnarray}
\Gamma_{\textrm{int}} = \bigcup\limits_{k=1}^{N_{edge}}\gamma_e \backslash \partial \Omega.
\end{eqnarray}
The definition of the unit normal vector ${\bm n}$ on each $\gamma_e$ is related to the
numbering of the elements in the partition, such that ${\bm n}$ is defined outward with
respect to the element with the highest index number. The normal vector ${\bm \mu}$ is
defined outward to each element individually. Within the setting above, the following
reaction-diffusion problem is considered
\begin{eqnarray}
- \nabla \cdot (K(x)\nabla u) + u &=& f,\ \textrm{in}\ \Omega,\label{eq3}\\
             u &=& 0,\ \textrm{on}\ \partial \Omega,\label{eq303}
\end{eqnarray}
where $f$ is a real-valued function in $L^2(\Omega)$ and
$0 < K_0 \leq K(x) \leq K_1$. For the sake of clarity
in the notation, the jump and average operators on each
$\gamma_k \in \Gamma_\textrm{int}$ are, respectively, defined as
{\newcommand\langlen{\ensuremath{\langle}}
\begin{eqnarray}\label{eq13}
[v]=v|_{\gamma_e \subset \partial E_i}-v|_{\gamma_e \subset \partial E_j},
\langle v\rangle =\frac12(v|_{\gamma_e \subset \partial E_i}
+ v|_{\gamma_e \subset \partial E_j}),\ i>j,
\end{eqnarray}
where $\gamma_e=\textrm{int}(\partial E_i \cap \partial E_j)$ is
the common edge(in 2D and interface in 3D) between two neighbouring elements.

\subsection{The weak formulation of the problem (\ref{eq3})-(\ref{eq303})}
The discontinuous Galerkin method is a class of finite element methods using
completely discontinuous piecewise polynomial space for the numerical solution
and the test functions. First, we introduce the following $broken\ Sobolev\ space$:
\begin{eqnarray}
\mathcal{M}(P_h)=\{v \in L^2(\Omega)|v\shortmid_E \in H(\Delta,E),\forall E \in P_h,
[\nabla v \cdot {\bm n}] \in L^2(\Gamma_{\textrm{int}})\},\nonumber
\end{eqnarray}
where
\begin{eqnarray}
H(\Delta,E)=\{v \in L^2(E)|\nabla \cdot \nabla v \in L^2(E)\} \subset H^1(E).\nonumber
\end{eqnarray}
The norm of $M(P_h)$ is defined as
{\setlength\arraycolsep{2pt}
\begin{eqnarray}\label{eq1}
|||v|||^2 &=& \sum_{E \in P_h}\Big\{\|v\|^2_{\ast}+\frac{h^\nu}{p^\theta}
\|K(x)\nabla v \cdot {\bm \mu}\|^2_{H^{-1/2}(\partial E)}\Big\} {} \nonumber\\
&& {} +\sigma\frac{h^\lambda}{p^\zeta}\|
[K(x)\nabla v \cdot {\bm \mu}]\|^2_{L^{2}(\Gamma_{\textrm{int}})} {},
\end{eqnarray}}
where $\|v\|^2_{\ast}= \int_E |K(x)| |\nabla v|^2 dx +
\int_E |v|^2dx=\|K^{\frac{1}{2}} \nabla v\|^2_{L^2(E)}+\|v\|^2_{L^2(E)}$.
One can easily prove that norms $\|\cdot\|_{\ast}$ and $\|\cdot\|_{H^1(E)}$
are equivalent. The parameter $p \in \mathbb{R}$ that is introduced here
represents the minimum of all of the local orders of polynomial approximations
$p_E$ in the partition $P_h$. Notice that the parameters $\nu,\lambda,\theta,\zeta$
are greater than or equal to zero and that the subsequent norms in (\ref{eq1})
are defined as
\begin{eqnarray}
\|u\|_{H^{-1/2}(\partial E)}&=&\sup_{\varphi \in H^{1/2}(\partial E)}
\frac{|{\langle u,\varphi \rangle}_{-1/2 \times 1/2,\partial E}|}
{\|\varphi\|_{H^{1/2}(\partial E)}},\label{eq8}\\
\|\varphi\|_{H^{1/2}(\partial E)}
&=&\inf_{\substack{w \in H^{1}(E) \\ \gamma_0 w =\varphi}} \|w\|_{\ast},\label{eq9}
\end{eqnarray}
where ${\langle \cdot,\cdot \rangle}_{-1/2 \times 1/2,\partial E}$ denotes the duality
pairing in $H^{-1/2}(\partial E) \times H^{1/2}(\partial E)$, namely,
\begin{eqnarray}
\langle u, v \rangle_{-1/2 \times 1/2,\partial E}
=\int_{\partial E}uv ds.
\end{eqnarray}
And  $\gamma_0$
denotes the trace operator
$$\gamma_0:H^1(E) \rightarrow H^{1/2}(\partial E).$$
Now, the choice for the space of test functions, $V$, is the
completion of $\mathcal{M}(P_h)$ with respect to the norm $|||\cdot|||$.
The new discontinuous variational formulation, within this
new function space setting, is then stated as follows:
\begin{eqnarray}\label{eq2}
\textrm{Find}\ u \in V, s.t., B(u,v)=L(v),\ \forall v \in V,
\end{eqnarray}
where the bilinear form $B(u,v)$ and linear form $L(v)$ are defined as
\begin{eqnarray}
B(u,v) &=& \sum_{E \in P_h}\Big\{\int_{E}\big(K(x)\nabla u \cdot \nabla v + uv\big)dx {}\nonumber\\
&& {} - \int_{\partial E}\big(v(K(x)\nabla u \cdot {\bm \mu})
      -(K(x)\nabla v \cdot {\bm \mu})u\big)ds\Big\} {} \nonumber\\
&& {} + \int_{\Gamma_{\textrm{int}}}\big(\langle v\rangle[K(x)\nabla u \cdot {\bm n}]
      -\langle u\rangle[K(x)\nabla v \cdot {\bm n}]\big)ds {} \nonumber\\
&& {} + \int_{\Gamma_{\textrm{int}}} \sigma \frac{h^{\lambda}}{p^{\zeta}}
[K(x)\nabla u \cdot {\bm n}][K(x)\nabla v \cdot {\bm n}]ds, {} \label{eq5} \\
L(v) &=& \int_{\Omega}fvdx,\label{eq18}
\end{eqnarray}
where $[\cdot]$ and $\langle \cdot \rangle$ denote the jump
and average operators, respectively.

\begin{remark}\label{rem1201}
The formulation is closely related to the DG formulation by Oden, Babuska
and Baumann \cite{[1]J.T.Oden}. Indeed, if we choose the subspace $\tilde{V}(P_h)$
of $V$ of function with fluxes $\nabla v \cdot {\bm n} \in L^2(\partial E)$,
then we again get the DG formulation of \cite{[1]J.T.Oden}.
The only difference would then be the addition of the last term in (\ref{eq5}).
This term has been incorporated in \cite{[16]P.PercellandM.F.Wheeler,[22]T.J.R.HughesandG.Engel},
where it is accompanied by another penalty term on the jumps of the
function $[v]$ across the element interfaces. We replace the $[v]$ jumps by the
$[\nabla v\cdot {\bm \mu}]$ jumps, in order to prove both continuity and Inf-Sup
properties of the bilinear form with respect to the space $V$, in which
the norm is defined as $|||\cdot|||$.
\end{remark}

\begin{remark}\label{rem1202}
The well posedness of the variational formulation
(\ref{eq2}), and essential in some of these proofs are the continuity
and Inf-Sup conditions of the bilinear form in (\ref{eq5})  are proved in \cite{GeCao}.
\end{remark}

\subsection{The discrete problem}
When implementing the DG methods, we have to compute integrals over volumes(such as
triangles or quadrilaterals in 2D, terahedra or hexahedra in 3D) and faces(such as
edges in 2D, triangles or quadrilaterals in 3D). It would be too costly to compute
the integrals over each physical element in the mesh. A more economical and effective
approach is to use a change of variables to obtain an integral on a fixed element,
called the reference element. Let $\{F_E\}$ be a family of invertible maps defined
for a partition $P_h$ such that every element $E \in P_h$ is the image of $F_E$
acting on a reference element $\hat{E}$, as shown in Figure \ref{fig0011}.
\begin{eqnarray}
F_E: \hat{E} \rightarrow E,\qquad x=F_E(\hat{x}).
\end{eqnarray}

\begin{figure}[H]
\centering
\includegraphics[width=9cm]{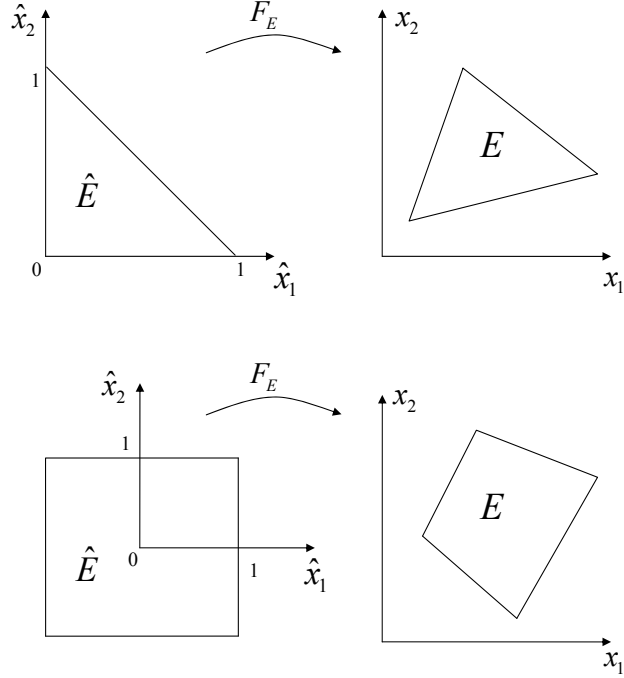}
\caption{Mapping from the reference elements to the physical space.}\label{fig0011}
\end{figure}

In the computational model, a finite-dimensional space of real-valued
piecewise polynomial functions of degree $\leq p_E$ is introduced, such
that
\begin{eqnarray}
V^{hp}=\{v \in L^2(\Omega)|\,v|_E=\hat{v}\circ F^{-1}_E,
\hat{v} \in P^{p_E}(\hat{E}),\forall E \in P_h\}.
\end{eqnarray}
$V^{hp}$ is noted that a subspace of $V$. Now, an  approximation $u_h$ of $u$
is sought as the solution of the following discrete problem:
\begin{eqnarray}\label{eq21}
\textrm{Find}\ u_h \in V^{hp},\ B(u_h,v_h) = L(v_h),\quad v_h \in V^{hp},
\end{eqnarray}
where the bilinear form $B(\cdot,\ \cdot)$ is given by (\ref{eq5}).


\section{Inf-Sup condition on the discrete space $V^{hp}$}

The following trace theorem is used to establish the Inf-Sup condition
on the discrete space $V^{hp}$.
\begin{lem}\label{lem3}
Let $E \in P_h$ be characterized by an affine mapping
$F_{E}$(see section 2.3) and $w \in H^2(E)$. Then there exists a constant
$C>0$, independent of $h_E$, such that
\begin{eqnarray}\label{eq32}
\|\nabla w \cdot {\bm \mu}\|^2_{L^2(\partial E)} \leq C \Big\{
\frac{1}{h_E}\|\nabla w \|^2_{L^2(E)}+\|\nabla w \|_{L^2(E)}
\|\nabla^2 w \|_{L^2(E)}\Big\}.
\end{eqnarray}
\end{lem}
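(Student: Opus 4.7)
The plan is to reduce (\ref{eq32}) to a standard multiplicative trace inequality on the reference element $\hat E$ and then transport the estimate back to $E$ via the affine map $F_E$, carefully tracking the powers of $h_E$.

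First, since $|\bm\mu|=1$, pointwise Cauchy--Schwarz gives $|\nabla w\cdot\bm\mu|^{2}\le|\nabla w|^{2}$, so it suffices to prove
\begin{equation*}
\|\nabla w\|^{2}_{L^{2}(\partial E)}\le C\Bigl\{h_E^{-1}\|\nabla w\|^{2}_{L^{2}(E)}+\|\nabla w\|_{L^{2}(E)}\|\nabla^{2}w\|_{L^{2}(E)}\Bigr\}.
\end{equation*}
The problem thus becomes a trace estimate applied to the vector field $\nabla w\in (H^{1}(E))^{2}$.

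The central ingredient is the \emph{multiplicative trace inequality} on $\hat E$: for every $\hat v\in H^{1}(\hat E)$,
\begin{equation*}
\|\hat v\|^{2}_{L^{2}(\partial\hat E)}\le \hat C\bigl(\|\hat v\|^{2}_{L^{2}(\hat E)}+\|\hat v\|_{L^{2}(\hat E)}\|\hat\nabla\hat v\|_{L^{2}(\hat E)}\bigr).
\end{equation*}
I would prove this quickly by choosing a smooth vector field $\bm\psi$ on $\hat E$ with $\bm\psi\cdot\hat{\bm\mu}=1$ on $\partial\hat E$, applying the divergence theorem to $\hat v^{\,2}\bm\psi$, and controlling the resulting interior term by Cauchy--Schwarz. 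Applied componentwise to $\hat v=\partial_{\hat x_{i}}\hat w$ (with $\hat w=w\circ F_{E}$) and summed in $i$, this yields the analogous bound for $\hat\nabla\hat w$ on $\hat E$.

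Finally, scale back. Writing $F_{E}(\hat x)=B_{E}\hat x+b_{E}$, one has $\|B_{E}\|\sim h_{E}$, $|\det B_{E}|\sim h_{E}^{2}$, $ds\sim h_{E}\,d\hat s$, and $\nabla w=B_{E}^{-T}\hat\nabla\hat w$, so a direct change of variables produces
\begin{align*}
\|\nabla w\|^{2}_{L^{2}(\partial E)}&\sim h_{E}^{-1}\|\hat\nabla\hat w\|^{2}_{L^{2}(\partial\hat E)},\\
\|\hat\nabla\hat w\|^{2}_{L^{2}(\hat E)}&\sim \|\nabla w\|^{2}_{L^{2}(E)},\\
\|\hat\nabla^{2}\hat w\|_{L^{2}(\hat E)}&\sim h_{E}\|\nabla^{2}w\|_{L^{2}(E)}.
\end{align*}
Substituting these into the reference inequality and multiplying through by $h_{E}^{-1}$ recovers exactly (\ref{eq32}).

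The step I expect to require the most care is this last accounting: the factor $h_{E}^{-1}$ in front of the first term of (\ref{eq32}), and its absence on the cross term, come from the fact that the scalings attached to $\|\hat\nabla\hat w\|_{L^{2}(\hat E)}$ and $\|\hat\nabla^{2}\hat w\|_{L^{2}(\hat E)}$ differ by exactly one power of $h_{E}$, which cancels against the $h_{E}^{-1}$ produced by the boundary scaling. Once the reference multiplicative trace inequality is in place, the rest is a bookkeeping exercise, uniform in $E$ because $\{F_{E}\}$ arises from a regular family of partitions.
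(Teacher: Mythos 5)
Your proof is correct, and the $h_E$ bookkeeping in the last step checks out: the boundary integral contributes a factor $h_E^{-1}$ under the change of variables, $\|\hat\nabla\hat w\|_{L^2(\hat E)}$ is $h_E$-neutral, and $\|\hat\nabla^2\hat w\|_{L^2(\hat E)}\sim h_E\|\nabla^2 w\|_{L^2(E)}$, which is exactly what strips the $h_E$ off the cross term. Note, however, that the paper gives no proof of this lemma at all --- it simply cites Riviere's book --- so there is no in-paper argument to compare against. Relative to the proof in that reference, your route is genuinely different in organization though not in substance: Riviere runs the same divergence-theorem argument directly on the physical element, taking $\bm\psi(x)=x-A$ with $A$ the vertex opposite the edge $e$, so that $\bm\psi\cdot\bm\mu=2|E|/|e|$ on $e$, vanishes on the other edges, and $|\bm\psi|\lesssim h_E$ in $E$; the powers of $h_E$ then fall out of $|e|/|E|\sim h_E^{-1}$ with no reference-element mapping, while your version gets uniformity in $E$ from shape regularity of $\{F_E\}$ instead. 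One small imprecision in your sketch: on a polygonal $\partial\hat E$ you cannot choose a single smooth $\bm\psi$ with $\bm\psi\cdot\hat{\bm\mu}=1$ exactly on the whole boundary, since the normal jumps at the corners; what you need, and can obtain for a star-shaped $\hat E$ (e.g.\ $\bm\psi(\hat x)=\hat x-\hat x_0$ with $\hat x_0$ interior), is $\bm\psi\cdot\hat{\bm\mu}\ge c>0$ on $\partial\hat E$, which is all the argument requires.
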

One can find the proof of the lemma in \cite{[5]B.Riviere}.
\begin{cor}\label{cor1}
Given a polynomial(degree $\leq p_E$) $v_h \in P^{p_E}(E)$,
and the mapping between $E$ and $\hat{E}$ is affine, then there exists $C>0$ such that
\begin{eqnarray}
\|\nabla v_h\|^2_{L^2(E)} \geq C\frac{h_E}{p^2_E}
\|\nabla v_h \cdot {\bm \mu}\|^2_{H^{-1/2}(\partial E)}.
\end{eqnarray}
\end{cor}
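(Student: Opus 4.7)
The plan is to combine Lemma~\ref{lem3} with a polynomial inverse estimate on the affinely mapped element, and then pass from the resulting $L^2(\partial E)$ bound to the desired $H^{-1/2}(\partial E)$ bound.

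First I would apply Lemma~\ref{lem3} directly to $v_h$, since every polynomial on $E$ lies in $H^2(E)$:
\begin{equation*}
\|\nabla v_h \cdot \bm{\mu}\|^2_{L^2(\partial E)} \leq C\Big\{h_E^{-1}\|\nabla v_h\|^2_{L^2(E)} + \|\nabla v_h\|_{L^2(E)}\,\|\nabla^2 v_h\|_{L^2(E)}\Big\}.
\end{equation*}
Next, because $v_h \in P^{p_E}(E)$ and $F_E$ is affine, pulling back to $\hat{E}$ (with $\hat{v}_h = v_h\circ F_E$) and invoking the standard polynomial inverse inequality $\|\hat{\nabla}^2 \hat{v}_h\|_{L^2(\hat{E})} \leq C p_E^2 \|\hat{\nabla}\hat{v}_h\|_{L^2(\hat{E})}$, combined with the affine scaling relations $\|\nabla^2 v_h\|_{L^2(E)} \sim h_E^{-1}\|\hat{\nabla}^2 \hat{v}_h\|_{L^2(\hat{E})}$ and $\|\nabla v_h\|_{L^2(E)} \sim \|\hat{\nabla}\hat{v}_h\|_{L^2(\hat{E})}$, gives $\|\nabla^2 v_h\|_{L^2(E)} \leq C p_E^2 h_E^{-1}\|\nabla v_h\|_{L^2(E)}$. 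Plugging this back into the trace bound yields
\begin{equation*}
\|\nabla v_h \cdot \bm{\mu}\|^2_{L^2(\partial E)} \leq C\,\frac{p_E^2}{h_E}\,\|\nabla v_h\|^2_{L^2(E)}.
\end{equation*}

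To reach the $H^{-1/2}(\partial E)$ norm I would invoke the continuous embedding $L^2(\partial E) \hookrightarrow H^{-1/2}(\partial E)$ implicit in \eqref{eq8}--\eqref{eq9}: Cauchy--Schwarz gives $|\langle u,\varphi\rangle_{\partial E}| \leq \|u\|_{L^2(\partial E)}\|\varphi\|_{L^2(\partial E)}$, and the ordinary trace inequality on $E$ controls $\|\varphi\|_{L^2(\partial E)}$ by the $\|\cdot\|_*$-based $H^{1/2}$ norm, using $K_0 \leq K(x) \leq K_1$ to equate $\|\cdot\|_*$ with the ordinary $H^1$ norm up to constants. This delivers $\|\nabla v_h \cdot \bm{\mu}\|^2_{H^{-1/2}(\partial E)} \leq C\|\nabla v_h \cdot \bm{\mu}\|^2_{L^2(\partial E)}$; combining with the previous display and rearranging gives the stated inequality.

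The principal obstacle is the final embedding step. Because the $H^{1/2}(\partial E)$ norm in \eqref{eq9} is defined through the element-weighted norm $\|\cdot\|_{*}$, one has to show that $\|u\|_{H^{-1/2}(\partial E)} \leq C\|u\|_{L^2(\partial E)}$ holds with a constant uniform in $h_E$, which demands a careful scaling argument to $\hat{E}$. If that track proves uncomfortable, an alternative is to avoid the $L^2(\partial E)$ step entirely: integration by parts $\int_{\partial E}(\nabla v_h\cdot\bm{\mu})w\,ds = \int_E(\nabla v_h\cdot\nabla w + w\Delta v_h)\,dx$, Cauchy--Schwarz, and the bounds on $K(x)$ yield $\|\nabla v_h\cdot\bm{\mu}\|_{H^{-1/2}(\partial E)} \leq C(\|\nabla v_h\|_{L^2(E)} + \|\Delta v_h\|_{L^2(E)})$, and the polynomial inverse estimate applied to $\|\Delta v_h\|_{L^2(E)}$ then closes the argument.
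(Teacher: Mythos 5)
Your proposal follows essentially the same route as the paper's own proof: apply the trace inequality of Lemma~\ref{lem3} to $v_h$, control $\|\nabla^2 v_h\|_{L^2(E)}$ via the polynomial inverse inequality $\|\nabla w\|_{L^2(E)}\leq C\,p_E^2h_E^{-1}\|w\|_{L^2(E)}$ applied with $w=\nabla v_h$, and then pass from the $L^2(\partial E)$ bound to the $H^{-1/2}(\partial E)$ bound. The only difference is one of care: the paper simply writes $\|\nabla v_h\cdot\bm{\mu}\|^2_{H^{-1/2}(\partial E)}\leq\|\nabla v_h\cdot\bm{\mu}\|^2_{L^2(\partial E)}$ without comment, whereas you correctly flag that this embedding constant must be verified to be uniform in $h_E$ given the $\|\cdot\|_{\ast}$-weighted definition of $H^{1/2}(\partial E)$ in \eqref{eq9}.
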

\begin{proof}
Since $v_h \in P^{p_E}(E)$, we know that $v_h \in H^2(E)$.
Given this information, we can use the following inverse inequality,
obtained from \cite{[27]C.Schwab}.
\begin{eqnarray}
\|\nabla v_h\|_{L^2(E)} \leq C\frac{p^2_E}{h_E}
\|v_h\|_{L^{2}(E)},\quad \forall v_h \in P^{p_E}(E).
\end{eqnarray}
Substituting this inequality into the inequality
(\ref{eq32}) yields
\begin{eqnarray}
\|\nabla v_h \cdot {\bm \mu}\|^2_{H^{-1/2}(\partial E)}
\leq \|\nabla v_h \cdot {\bm \mu}\|^2_{L^{2}(\partial E)}
\leq C\frac{p^2_E}{h_E}\|\nabla v_h\|^2_{L^2(E)}  .
\end{eqnarray}
\end{proof}

\begin{thm}\label{thm4}
Let $\{F_E\}$ be a family of affine invertible mappings. If $\sigma>0$,
then there exists a $\gamma_h = \gamma(\sigma,h,p)>0$, such
that
\begin{eqnarray}
\sup_{v_h \in V^{hp}\backslash\{0\}} \frac{|B(u_h,v_h)|}{|||v_h|||}
\geq \gamma_h |||u_h|||,\qquad \forall u_h \in V^{hp}\backslash\{0\}.
\end{eqnarray}
here we can write
$$\gamma_h = C \min\big\{1,\frac{h^{1-\nu}}{p^{2-\theta}}\big\},\quad C>0.$$
\end{thm}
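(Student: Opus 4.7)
The plan is to establish the Inf-Sup condition by simply testing with $v_h = u_h$, i.e.\ to prove a coercivity-type inequality $B(u_h,u_h)\gtrsim \gamma_h\,|||u_h|||^2$ and then use
$$\sup_{v_h\in V^{hp}\setminus\{0\}}\frac{|B(u_h,v_h)|}{|||v_h|||}\;\geq\;\frac{|B(u_h,u_h)|}{|||u_h|||}.$$
The ingredients are (i) the built-in skew-symmetry of the boundary and interface terms in $B$, and (ii) the inverse trace inequality of Corollary~\ref{cor1}, which is the only tool that distinguishes $V^{hp}$ from $V$ and is responsible for the shape of $\gamma_h$.

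First I would plug $v_h=u_h$ into (\ref{eq5}) and observe that the terms $\int_{\partial E}(v(K\nabla u\!\cdot\!{\bm\mu})-(K\nabla v\!\cdot\!{\bm\mu})u)\,ds$ and $\int_{\Gamma_{\textrm{int}}}(\langle v\rangle[K\nabla u\!\cdot\!{\bm n}]-\langle u\rangle[K\nabla v\!\cdot\!{\bm n}])\,ds$ are manifestly skew-symmetric in $(u,v)$ and therefore vanish. What survives is
$$B(u_h,u_h)=\sum_{E\in P_h}\|u_h\|_{\ast}^2+\sigma\,\frac{h^\lambda}{p^\zeta}\,\bigl\|[K\nabla u_h\!\cdot\!{\bm n}]\bigr\|_{L^2(\Gamma_{\textrm{int}})}^2.$$
This immediately controls the first and third contributions in $|||u_h|||^2$; the only missing piece is the normal-flux term $\sum_E\frac{h^\nu}{p^\theta}\|K\nabla u_h\!\cdot\!{\bm\mu}\|_{H^{-1/2}(\partial E)}^2$.

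Next I would use Corollary~\ref{cor1} on each element to close the gap. Since $0<K_0\leq K(x)\leq K_1$, the corollary yields
$$\frac{h^\nu}{p^\theta}\,\bigl\|K\nabla u_h\!\cdot\!{\bm\mu}\bigr\|_{H^{-1/2}(\partial E)}^2\leq C\,\frac{h^\nu}{p^\theta}\cdot\frac{p_E^2}{h_E}\,\|K\nabla u_h\|_{L^2(E)}^2\leq C\,\frac{p^{2-\theta}}{h^{1-\nu}}\,\|u_h\|_{\ast}^2.$$
Summing over $E$ and combining with the expression for $B(u_h,u_h)$ gives
$$|||u_h|||^2\;\leq\;\Bigl(1+C\tfrac{p^{2-\theta}}{h^{1-\nu}}\Bigr)\sum_{E}\|u_h\|_{\ast}^2+\sigma\tfrac{h^\lambda}{p^\zeta}\bigl\|[K\nabla u_h\!\cdot\!{\bm n}]\bigr\|_{L^2(\Gamma_{\textrm{int}})}^2\;\leq\;C'\max\!\Bigl\{1,\tfrac{p^{2-\theta}}{h^{1-\nu}}\Bigr\}B(u_h,u_h),$$
which is exactly the assertion with $\gamma_h=C\min\{1,h^{1-\nu}/p^{2-\theta}\}$ after dividing by $|||u_h|||$ and taking the sup over $v_h$.

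I expect the routine calculation above to be essentially all that is required; the only real subtlety is making sure Corollary~\ref{cor1}, which is stated for plain polynomial gradients, can be legitimately applied to $K(x)\nabla u_h\!\cdot\!{\bm\mu}$. The cleanest way is to use the uniform bound $K\leq K_1$ to reduce $\|K\nabla u_h\!\cdot\!{\bm\mu}\|_{H^{-1/2}(\partial E)}$ to $\|\nabla u_h\!\cdot\!{\bm\mu}\|_{H^{-1/2}(\partial E)}$ (or to assume $K$ is piecewise constant on $P_h$, which is the setting natural for discontinuous coefficients); the resulting constants get absorbed into $C$. Beyond that, the proof really is a direct coercivity argument, so the test function $v_h=u_h$ suffices and no Babuska--Brezzi style supplementary construction is needed.
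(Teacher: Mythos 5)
Your proposal is correct and follows essentially the same route as the paper: test with $v_h=u_h$ so the skew-symmetric boundary and interface terms cancel, then use Corollary~\ref{cor1} to absorb the $\frac{h^\nu}{p^\theta}\|K\nabla u_h\cdot{\bm\mu}\|^2_{H^{-1/2}(\partial E)}$ contribution of $|||u_h|||^2$ into $\sum_E\|u_h\|^2_{\ast}$, which is exactly where the factor $\min\{1,h^{1-\nu}/p^{2-\theta}\}$ comes from. Your write-up is in fact more explicit than the paper's (which compresses this into a single displayed inequality), and your remark about reducing $\|K\nabla u_h\cdot{\bm\mu}\|_{H^{-1/2}(\partial E)}$ to the $L^2(\partial E)$ norm before invoking the bound $K\le K_1$ correctly resolves the only point the paper leaves implicit.
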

\begin{proof}
By definition of the supremum, we get
\begin{eqnarray}
\sup_{v_h \in V^{hp}\backslash\{0\}} \frac{|B(u_h,v_h)|}{|||v_h|||}
\geq \frac{B(u_h,u_h)}{|||u_h|||}.
\end{eqnarray}
By applying Corollary \ref{cor1}, it is clear that there exists
$C>0$, such that
\begin{eqnarray}
B(u_h,u_h) &\geq& C\Big\{\sum_{E \in P_h}\Big(\|u_h\|^2_{\ast}+\frac{h}{p^2}
\|K(x)\nabla u_h \cdot {\bm n}\|^2_{H^{-1/2}(\partial E)}\Big) {} \nonumber\\
&& {} +\sigma\frac{h^\lambda}{p^\zeta}\|
[K(x)\nabla u_h \cdot {\bm n}]\|^2_{L^{2}(\Gamma_{\textrm{int}})} {}\Big\},
\end{eqnarray}
which completes the proof.
\end{proof}

\section{Error estimation}

In this section, we investigate the convergence properties of solution
$\{u_h\}$ of (\ref{eq21}). Let $u \in V$ be the exact solution to the
VBVP (\ref{eq2}), then by using the linearity of $B(\cdot,\cdot)$, it
follows easily that the approximation error $e_h=u-u_h$ is governed by
\begin{eqnarray}
B(e_h,v)=L(v)-B(u_h,v)=R^h(v),\quad \forall v \in V,
\end{eqnarray}
where $R^h:V \rightarrow \mathbb{R}$ is the residual functional. Note
that, due to (\ref{eq21}), the residual satisfies the following orthogonality
property on $V^{hp}$,
\begin{eqnarray}\label{eq33}
B(e_h,v_h)=0,\quad \forall v_h \in V.\label{eq23}
\end{eqnarray}

We start firstly by proving an interpolation theorem in the norm
$|||\cdot|||$, secondly we need for our proof of the error estimate
in this norm. Then we derives the convergence rates in the norm
$\|\cdot\|_{H^1(P_h)}$. Finally we prove an error estimate in the
$L^2$ norm.


We introduce a family of interpolants $\{\pi^{E}_{hp}\}$, such that
\begin{eqnarray*}
&&\pi^{E}_{hp}: H^{\gamma_{k}}(E) \rightarrow P^{p_{E}}(E),\\
&&\pi^{E}_{hp}(v_h) = v_h,\quad \forall v_h \in P^{p_{E}}(E).
\end{eqnarray*}

Some results for the interpolation errors from the work of Babuska
and Suri \cite{[23]I.BabuskaandM.Suri} will be needed.
\begin{thm}\label{thm3}
For $\varphi \in H^{\gamma_{k}}(E)$, there
exists $C>0$, independent of $\varphi,p_k$ and $\gamma_k$ and a
sequence $\pi^{E}_{hp} \in P^{p_{E}}(E)$, such that
\begin{eqnarray*}
&&\|\varphi - \pi^{E}_{hp}(\varphi)\|_{L^2(E)}
\leq C\frac{h^{\mu_k}}{p^{r_k}_{E}}\|\varphi\|_{H^{\gamma_k}(E)},\\
&&\|\nabla (\varphi - \pi^{E}_{hp}(\varphi))\|_{L^2(E)}
\leq C\frac{h^{\mu_k-1}}{p^{r_k-1}_{E}}\|\varphi\|_{H^{\gamma_k}(E)},\\
&&\|\nabla^2 (\varphi - \pi^{E}_{hp}(\varphi))\|_{L^2(E)}
\leq C\frac{h^{\mu_k-2}}{p^{r_k-2}_{E}}\|\varphi\|_{H^{\gamma_k}(E)},
\end{eqnarray*}
where $\mu_k = \min\{p_E+1,\gamma_k\},\ \gamma_k \geq 1,\ p_E \geq 1$.
\end{thm}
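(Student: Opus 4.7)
The plan is to pull everything back to the reference element $\hat{E}$, build the polynomial approximant there using a spectral projection, and then push forward via the affine map $F_E$, using standard scaling to produce the $h$ powers. Since the statement is attributed to Babu\v{s}ka and Suri \cite{[23]I.BabuskaandM.Suri}, I would follow their construction; below I sketch how I would organize the argument.

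First, set $\hat{\varphi}(\hat{x}) = \varphi(F_E(\hat{x}))$. Because $F_E$ is affine, $\hat{\varphi} \in H^{\gamma_k}(\hat{E})$ with
\[
\|\hat{\varphi}\|_{H^{s}(\hat{E})} \;\le\; C\, h_E^{\,s - 1}\,\|\varphi\|_{H^{s}(E)}
\]
in two dimensions (the $h_E^{-1}$ coming from the Jacobian of $F_E$ and the chain rule for derivatives of order $s$ being balanced against the $h_E^{2}$ from the measure change). I would define $\pi^{E}_{hp}(\varphi) := \bigl(\hat{\pi}_{p_E}(\hat{\varphi})\bigr) \circ F_E^{-1}$, where $\hat{\pi}_{p_E}$ is the polynomial interpolant/projector constructed on $\hat{E}$. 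The desired estimates on $E$ then follow by inverting the scaling relation:
\[
\|\nabla^{s}(\varphi - \pi^{E}_{hp}\varphi)\|_{L^{2}(E)}
\;\le\; C\, h_E^{\,1-s}\, \|\hat{\nabla}^{s}(\hat{\varphi} - \hat{\pi}_{p_E}\hat{\varphi})\|_{L^{2}(\hat{E})}
\]
for $s=0,1,2$, so it suffices to obtain on $\hat{E}$ the bound
\[
\|\hat{\nabla}^{s}(\hat{\varphi} - \hat{\pi}_{p_E}\hat{\varphi})\|_{L^{2}(\hat{E})}
\;\le\; C\, p_E^{\,s - r_k}\,\|\hat{\varphi}\|_{H^{\gamma_k}(\hat{E})}.
\]

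For the reference-element estimate, I would use the standard $hp$-construction: on a square, expand $\hat{\varphi}$ in a tensor-product Legendre basis and truncate at degree $p_E$; on a triangle, use the analogous Jacobi (Koornwinder/Dubiner) basis. The decay of expansion coefficients of $H^{\gamma_k}$ functions then yields the $p^{-(\gamma_k - s)}$ rate. For non-integer $\gamma_k$ one interpolates between integer orders with the $K$-method. Collecting the two scalings gives $h^{\mu_k - s}/p_E^{\,r_k - s}$ with $\mu_k = \min\{p_E+1,\gamma_k\}$, as claimed.

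The main obstacle is to ensure the same operator $\pi^{E}_{hp}$ simultaneously delivers the sharp rates in $L^2$, $H^1$, and $H^2$: a straight $L^2$-orthogonal truncation loses one power of $p$ in the derivative norms. The Babu\v{s}ka--Suri construction handles this by using a projection built to be stable in $H^{\gamma_k}$ (so that inverse estimates on the tail of the expansion are avoided), combined with a $K$-functional interpolation argument between integer Sobolev exponents. Verifying that exactly this projection gives all three bounds with the stated, uniformly-in-$p_E$ constants is the delicate point; everything else reduces to routine scaling and summation over $E \in P_h$.
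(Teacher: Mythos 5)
The paper does not prove Theorem~\ref{thm3} at all: it is imported verbatim as a known result of Babu\v{s}ka and Suri \cite{[23]I.BabuskaandM.Suri}, so there is no in-paper argument to compare yours against. Your sketch is the standard route by which such $hp$-estimates are established (and essentially the route of the cited reference): pull back by the affine map, approximate on $\hat{E}$ by a spectral projection, and rescale. The scaling identities you record are correct for the order-$s$ seminorms in two dimensions, and the reference-element spectral estimates via Legendre/Jacobi expansions and $K$-functional interpolation are the right ingredients.

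The one place where your proposal stops short of a proof is exactly the place where the theorem is nontrivial, and you name it yourself but do not resolve it: the claim that ``collecting the two scalings gives $h^{\mu_k-s}/p_E^{\,r_k-s}$.'' The two scalings do not combine so directly. If $\gamma_k\le p_E+1$ the straightforward combination gives $h^{\gamma_k-s}p_E^{\,s-\gamma_k}$ and all is well; but when $\gamma_k>p_E+1$ the optimal $h$-power $\mu_k-s=p_E+1-s$ is obtained by a Bramble--Hilbert reduction to the $H^{\mu_k}$-seminorm (using that the projection reproduces $P^{p_E}$), and that reduction, done naively, degrades the $p$-rate from $p_E^{\,s-r_k}$ to $p_E^{\,s-\mu_k}$. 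Recovering the full $h$-power and the full $p$-power \emph{simultaneously} is the actual content of the Babu\v{s}ka--Suri argument (a decomposition of $\hat\varphi$ combined with interpolation-space techniques), and your sketch defers it entirely to the citation. As an outline of the literature proof this is acceptable --- and no worse than what the paper does, which is to cite the result without proof --- but as a self-contained argument it has a genuine gap at precisely the step that makes the theorem a theorem. A second, cosmetic point: the exponent $r_k$ is never defined in the statement (in the source it is tied to $\gamma_k$), so any complete proof would first have to pin that down.
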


By extending the local interpolant operators $\pi^{E}_{hp}(\cdot)$ to zero outside
of $E$ for every $E \in P_h$, we can define the global interpolant
$\Pi_{hp}$ as follows
\begin{eqnarray}\label{eq22}
\Pi_{hp}: V \rightarrow V^{hp},\quad \Pi_{hp}(u)
=\sum_{E \in P_h}\pi^{E}_{hp}(u|_E),\quad u \in V.
\end{eqnarray}

We define the notations as follows,
\begin{eqnarray*}
e_h &=& u-u_h = u-\Pi_{hp}(u) + \Pi_{hp}(u)-u_h\\
&=& \eta_h + \xi_h,
\end{eqnarray*}
where $\eta_h=u-\Pi_{hp}(u),\ \xi_h=\Pi_{hp}(u)-u_h.$

\begin{thm}\label{thm6}
Let $u \in V$ and $u|_E \in {H^{\gamma_k}}(E), \gamma_k \geq 2$,
let the stabilization parameters $\sigma>0$ and $\lambda,\zeta
\geq 0$, and let the norm parameters $\nu,\theta \geq 0$, there exists $C>0$,
independent of $u,h$, and $p$ such that the interpolation error $\eta_h = u-\Pi_{hp}(u)$
can be bounded as follows
\begin{eqnarray*}
|||\eta_h||| \leq C \frac{h^{\mu^{\ast}}}{p^{\gamma^{\ast}}}
\|u\|_{H^{\gamma_k}(P_h)},\quad \gamma_k \geq 2,
\end{eqnarray*}
where
\begin{eqnarray*}
&&\mu^{\ast}=\min \big\{\mu-1,\mu-\frac32+\frac\nu2,
\mu-\frac32+\frac\lambda 2 \big\},\\
&&\gamma^{\ast}=\min \big\{\gamma-1,\gamma-\frac32+\frac\theta 2,
\gamma-\frac32+\frac\zeta 2 \big\},\\
&&\mu=\min \big\{p+1,\gamma\big\},
\end{eqnarray*}
and where $\gamma = min_{E \in P_h}(\gamma_k)$.
\end{thm}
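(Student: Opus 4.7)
The plan is to decompose $|||\eta_h|||^2$ into its three constituent pieces (the $\|\cdot\|_\ast$ volume term, the weighted $H^{-1/2}(\partial E)$ flux term, and the weighted $L^2(\Gamma_{\text{int}})$ jump term) and bound each one separately by invoking Theorem \ref{thm3} together with Lemma \ref{lem3}. Since $K(x)$ is uniformly bounded above by $K_1$, the $K$-weights produce only a harmless multiplicative constant and can be absorbed into $C$ throughout.

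For the volume term $\sum_E \|\eta_h\|_\ast^2$, I would apply Theorem \ref{thm3} directly: the $L^2(E)$ bound contributes $h^{2\mu}/p^{2\gamma}$ and the gradient bound contributes $h^{2\mu-2}/p^{2\gamma-2}$, so the latter dominates and yields $h^{2(\mu-1)}/p^{2(\gamma-1)}\|u\|^2_{H^{\gamma_k}(E)}$. For the flux term, first use $\|\cdot\|_{H^{-1/2}(\partial E)} \leq C\|\cdot\|_{L^2(\partial E)}$ to replace the dual-norm by an $L^2$ boundary norm, then apply Lemma \ref{lem3} to $w = \eta_h$:
\begin{eqnarray*}
\|K\nabla\eta_h\cdot\bm{\mu}\|^2_{H^{-1/2}(\partial E)}
 &\leq& C\Bigl\{h_E^{-1}\|\nabla\eta_h\|^2_{L^2(E)}
       + \|\nabla\eta_h\|_{L^2(E)}\|\nabla^2\eta_h\|_{L^2(E)}\Bigr\}.
\end{eqnarray*}
Plugging in the three inequalities of Theorem \ref{thm3} (with $r_k = \gamma_k$), both summands are of order $h^{2\mu-3}/p^{2\gamma-3}$ (after taking the weaker $p$-exponent between the two), so after multiplication by $h^\nu/p^\theta$ this piece is bounded by $(h^{\mu-3/2+\nu/2}/p^{\gamma-3/2+\theta/2})^2\|u\|^2_{H^{\gamma_k}(E)}$.

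The jump term is treated identically: since $[K\nabla\eta_h\cdot\bm{n}]$ restricted to each interior edge is controlled by the traces $K\nabla\eta_h\cdot\bm{\mu}$ from the two adjacent elements, the triangle inequality converts $\|\cdot\|^2_{L^2(\Gamma_{\text{int}})}$ into a sum of $\|K\nabla\eta_h\cdot\bm{\mu}\|^2_{L^2(\partial E)}$ contributions, and the same application of Lemma \ref{lem3} followed by Theorem \ref{thm3} yields the bound $(h^{\mu-3/2+\lambda/2}/p^{\gamma-3/2+\zeta/2})^2\|u\|^2_{H^{\gamma_k}(E)}$ after multiplication by $\sigma h^\lambda/p^\zeta$.

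Summing over $E \in P_h$ and collecting the three bounds, the overall rate is governed by whichever piece has the smallest $h$-exponent and the smallest $p$-exponent, which produces exactly $\mu^\ast = \min\{\mu-1,\mu-3/2+\nu/2,\mu-3/2+\lambda/2\}$ and $\gamma^\ast = \min\{\gamma-1,\gamma-3/2+\theta/2,\gamma-3/2+\zeta/2\}$, as required. The assumption $\gamma_k \geq 2$ is used precisely so that the $\|\nabla^2\eta_h\|_{L^2(E)}$ estimate from Theorem \ref{thm3} is available. The main technical subtlety is bookkeeping the competing $h$ and $p$ exponents arising from the product $\|\nabla\eta_h\|\,\|\nabla^2\eta_h\|$ in Lemma \ref{lem3}, and verifying that this cross-term dominates the $h_E^{-1}\|\nabla\eta_h\|^2$ term in $p$ (it gives $p^{2\gamma-3}$ versus $p^{2\gamma-2}$); otherwise the argument is a routine combination of the two previously cited results.
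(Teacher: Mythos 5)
Your proposal is correct and follows essentially the same route as the paper: split $|||\eta_h|||^2$ into the volume, flux and jump pieces, pass from $H^{-1/2}(\partial E)$ to $L^2(\partial E)$, control the boundary terms with the trace inequality of Lemma \ref{lem3}, and then insert the interpolation estimates of Theorem \ref{thm3}. Your exponent bookkeeping (giving $p^{2\gamma-3+\zeta}$ for the jump contribution) is in fact the one consistent with the stated $\gamma^{\ast}$; the paper's intermediate display writes $p^{2\gamma-4+\zeta}$ there, which appears to be a typo.
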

The norm in the space $H^{\gamma_k}(P_h)$ is defined as
$$\|u\|_{H^{\gamma_k}(P_h)}=
\sqrt{\sum_{E \in P_h} \|u\|^2_{H^{\gamma_k}(E)}}.$$

\begin{proof}
By recalling the definition the norm $|||\cdot|||$ (\ref{eq1}), applying
the triangle inequality, and using Lemma \ref{lem3}, we get
\begin{eqnarray*}
|||\eta_h|||^2 &\leq& C(\sigma) \sum_{E \in P_h} \Big\{\|\eta_h\|^2_{L^2(E)}
+\Big(1+\frac{h^{\nu-1}}{p^\theta}+\frac{h^{\lambda-1}}{p^\zeta}\Big)
\|\nabla \eta_h\|^2_{L^2(E)}{}\\
&& +\Big(\frac{h^{\nu}}{p^\theta}+\frac{h^{\lambda}}{p^\zeta}\Big)
\|\nabla \eta_h\|_{L^2(E)}\|\nabla^2 \eta_h\|_{L^2(E)}\Big\}.{}
\end{eqnarray*}
Now, application of the interpolation Theorem \ref{thm3} gives our
final result
\begin{eqnarray*}
|||\eta_h|||^2 \leq C(\sigma) \Big(\frac{h^{2\mu-2}}{p^{2\gamma-2}}
+\frac{h^{2\mu-3+\nu}}{p^{2\gamma-3+\theta}}
+\frac{h^{2\mu-3+\lambda}}{p^{2\gamma-4+\zeta}}\Big)
\sum_{E \in P_h}\|u\|^2_{H^{\gamma_k}(E)}.
\end{eqnarray*}
\end{proof}

\begin{thm}\label{thm9}
Given $\sigma >0$, let $u \in H^2(\Omega) \cap V$ be the unique
solution to the variational BVP (\ref{eq2}), $u_h \in V^{hp}$
be an approximation (\ref{eq21}) of $u$, and both the stabilization
and norm parameters be of order $O(h/p^2)$(i.e.,$\lambda = \nu
=1$ and $\zeta = \theta =2$). Then, the error $u-u_h$ satisfies
the bound
\begin{eqnarray*}
|||u-u_h||| \leq C(\sigma) \frac{h^{\mu-3/2}}{p^{\gamma-2}}
\|u\|_{H^{\gamma_k}(P_h)},\quad p \geq 1,\ \gamma \geq 2,
\end{eqnarray*}
where  $\gamma = min_{E \in P_h}(\gamma_k)$,
$\mu = \min\{p+1,\gamma\}$ and $C(\sigma)$ is a
positive constant depending on $\sigma$.
\end{thm}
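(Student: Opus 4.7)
The plan is to run the standard DG error-analysis: split $u-u_h=\eta_h+\xi_h$ with $\eta_h=u-\Pi_{hp}(u)$ and $\xi_h=\Pi_{hp}(u)-u_h\in V^{hp}$, bound $|||\eta_h|||$ directly by interpolation, and control $|||\xi_h|||$ through the Inf-Sup condition together with Galerkin orthogonality, so that the second piece is also reduced to an interpolation estimate. Combining the two by the triangle inequality then delivers the asserted rate.

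For the interpolation piece, Theorem \ref{thm6} with the prescribed parameters $\nu=\lambda=1$ and $\theta=\zeta=2$ collapses both exponents to $\mu^{\ast}=\mu-1$ and $\gamma^{\ast}=\gamma-1$, which gives
\[
|||\eta_h|||\le C\,\frac{h^{\mu-1}}{p^{\gamma-1}}\,\|u\|_{H^{\gamma_k}(P_h)}.
\]
This is already sharper than the asserted rate, so the $h^{1/2}p^{-1}$ loss must enter through $\xi_h$. With these same parameters, Theorem \ref{thm4} produces an Inf-Sup constant $\gamma_h=C\min\{1,h^{0}/p^{0}\}$ that is positive and independent of $h$ and $p$, while the Galerkin orthogonality (\ref{eq33}) gives $B(\xi_h,v_h)=-B(\eta_h,v_h)$, so
\[
|||\xi_h|||\le\frac{1}{\gamma_h}\sup_{v_h\in V^{hp}\setminus\{0\}}\frac{|B(\eta_h,v_h)|}{|||v_h|||}.
\]
The volume and stabilization terms in (\ref{eq5}) are bounded by $|||\eta_h|||\,|||v_h|||$ via Cauchy--Schwarz against the definition of $|||\cdot|||$. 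For boundary/interface pairings of the form $\int_{\partial E}v_h\,K\nabla\eta_h\cdot\bm\mu\,ds$ and $\int_{\partial E}K\nabla v_h\cdot\bm\mu\,\eta_h\,ds$, I would apply the duality (\ref{eq8}): the flux factor is estimated by $\|K\nabla w\cdot\bm\mu\|_{H^{-1/2}(\partial E)}\le(p/\sqrt{h})\,|||w|||$, which is precisely the inverse of the weight $h/p^{2}$ built into $|||\cdot|||$, while the trace factor is controlled via (\ref{eq9}) by $\|w\|_{H^{1/2}(\partial E)}\le\|w\|_{\ast}\le|||w|||$. Collecting the contributions yields a continuity bound of the form $|B(\eta_h,v_h)|\le C(\sigma)(p/\sqrt{h})\,|||\eta_h|||\,|||v_h|||$, and inserting the interpolation estimate for $|||\eta_h|||$ produces precisely the rate $h^{\mu-3/2}/p^{\gamma-2}$.

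The main obstacle is the bookkeeping of $p$-powers in the boundary and interface pairings. Each such pairing loses one factor of $p$ through an inverse-type trace estimate (Corollary \ref{cor1}, Lemma \ref{lem3}), and the jump contributions on $\Gamma_{\textrm{int}}$ must be reorganized so that the stabilization weight $h/p^{2}$ absorbs the factor $[K\nabla\eta_h\cdot\bm n]$ without degrading either exponent. Verifying that the interface pieces do not collectively worsen the final rate beyond $h^{\mu-3/2}/p^{\gamma-2}$ is the central technical point; once the correct weights are matched, the $h$-scaling follows straightforwardly from Lemma \ref{lem3} applied to $\eta_h$ and from the interpolation bounds of Theorem \ref{thm3}.
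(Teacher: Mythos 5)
Your proposal follows essentially the same route as the paper: the same splitting $u-u_h=\eta_h+\xi_h$, the discrete Inf-Sup condition of Theorem \ref{thm4} (whose constant is indeed independent of $h$ and $p$ for $\nu=1$, $\theta=2$) combined with Galerkin orthogonality to reduce $|||\xi_h|||$ to $\sup_{v_h}|B(\eta_h,v_h)|/|||v_h|||$, the continuity-type bound with the loss factor $p/\sqrt{h}$, and Theorem \ref{thm6} with $\mu^{\ast}=\mu-1$, $\gamma^{\ast}=\gamma-1$. The only difference is that you sketch how the bound $|B(\eta_h,v_h)|\le C(\sigma)(p/\sqrt{h})\,|||\eta_h|||\,|||v_h|||$ would be obtained from the duality pairings and trace estimates, a step the paper simply asserts after setting $\lambda=1$ and $\zeta=2$.
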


\begin{proof}
Given the interpolation $\Pi_{hp}(u)$ defined in (\ref{eq22}).
Note that $\xi_h \in V^{hp}$ and that the interpolation error
$\eta_h \in H^2(P_h)$. Consequently, using the triangle inequality,
we obtain
\begin{eqnarray}
|||u-u_h||| \leq |||\eta_h||| + |||\xi_h|||.\label{eq24}
\end{eqnarray}
Applying the discrete Inf-Sup condition of Theorem \ref{thm4}
and taking $\nu = 1$ and $\theta =2$, we obtain
\begin{eqnarray*}
|||\xi_h||| \leq C\sup_{v_h \in V^{hp}\backslash\{0\}}
\frac{|B(\xi_h,v_h)|}{|||v_h|||}.
\end{eqnarray*}
Using the orthogonality property (\ref{eq23}), the
inequality can be rewritten as
\begin{eqnarray*}
|||\xi_h||| \leq C\sup_{v_h \in V^{hp}\backslash\{0\}}
\frac{|B(\eta_h,v_h)|}{|||v_h|||}.
\end{eqnarray*}
Taking $\lambda =1$ and $\zeta =2$, we have
\begin{eqnarray*}
|||\xi_h||| \leq C(\sigma)\frac{p}{\sqrt{h}}|||\eta_h|||.
\end{eqnarray*}
Thus, returning to (\ref{eq24}), we can conclude
\begin{eqnarray*}
|||u-u_h||| \leq C(\sigma)\frac{p}{\sqrt{h}}|||\eta_h|||.
\end{eqnarray*}
With this choice of coefficients, we get for
the parameters in Theorem \ref{thm6}
$$\mu^{\ast} = \mu -1,\quad \gamma^{\ast} = \gamma - 1.$$
Hence, we finish the proof.
\end{proof}
\begin{remark}
Since $\|v\|_{H^1(P_h)} \leq |||v|||$, the above theorem
would imply suboptimal convergence rates for the error
in the norm $\|\cdot\|_{H^1(P_h)}$.
\end{remark}


In the following, we derive optimal $h$ convergence rates for
the error in the norm $\|\cdot\|_{H^1(P_h)}$ for $p \geq 2$.
The decisive element in the proof is the use of a specific
type of interpolants, that were introduced by Riviere et al
\cite{[24]B.RiviereandM.F.Wheeler}.
The original estimates were them improved in
\cite{[28]B.RiviereandM.F.Wheeler, [29]S.PrudhommeandF.Pascal}.
By using these interpolants, we succeed at proving optimal $h$
convergence rates for $p \geq 2$, but the $p$ convergence
rates appear to be $1/2$ order lower than the one predicted
in the previous section. So we succeed at improving the
$h$ convergence rate, but not the $p$ rates.

We start our analysis by defining the following norm on
$H^2(P_h)$,
\begin{eqnarray}
|||v|||^2_{H^2(P_h)}=\sum_{E \in P_h}
\Big\{\|v\|^2_{\ast}+\frac{p^\zeta}{\sigma h^\lambda}
\|v\|^2_{L^2(\partial E)}
+\frac{\sigma h^\lambda}{p^\zeta}
\|\nabla v \cdot {\bm \mu}\|^2_{L^2(\partial E)}\Big\}.
\end{eqnarray}

\begin{lem}\label{lem4}
The bilinear form $B$ is continuous on $H^2(P_h) \times
H^2(P_h)$ with respect to the norm $\|\cdot\|_{H^1(P_h)}$
, i.e., there exists $N>0$, such that
\begin{eqnarray}
|B(u,v)| \leq N|||u|||_{H^2(P_h)} |||v|||_{H^2(P_h)},
\quad \forall u,v \in H^2(P_h),
\end{eqnarray}
where $N$ is a constant, independent of $h$ and $p$.
\end{lem}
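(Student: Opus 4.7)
The plan is to bound $|B(u,v)|$ termwise by inserting compensating weight factors $\sqrt{\sigma h^{\lambda}/p^{\zeta}}$ and $\sqrt{p^{\zeta}/(\sigma h^{\lambda})}$ into each Cauchy--Schwarz application so that the right-hand factors line up exactly with the three pieces of $|||\cdot|||_{H^2(P_h)}$: the volume norm $\|\cdot\|_{\ast}$, the scaled trace norm $\tfrac{p^{\zeta}}{\sigma h^{\lambda}}\|\cdot\|^2_{L^2(\partial E)}$, and the scaled normal-flux trace norm $\tfrac{\sigma h^{\lambda}}{p^{\zeta}}\|\nabla(\cdot)\cdot{\bm\mu}\|^2_{L^2(\partial E)}$. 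Concretely, I would split $B(u,v)$ in (\ref{eq5}) into four groups: (i) the volume integrals on each $E$, (ii) the single-element boundary integrals $\int_{\partial E}$, (iii) the average--jump interface integrals on $\Gamma_{\textrm{int}}$, and (iv) the stabilization integral on $\Gamma_{\textrm{int}}$.

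For group (i), the bound $|K(x)|\le K_1$ together with Cauchy--Schwarz in $L^2(E)$ and then in $\ell^2$ over $E\in P_h$ gives the estimate by $C\,(\sum_E\|u\|_{\ast}^2)^{1/2}(\sum_E\|v\|_{\ast}^2)^{1/2}$. For group (ii), I would write
\[
\int_{\partial E} v\,(K\nabla u\cdot{\bm\mu})\,ds
\;=\;\int_{\partial E}\Bigl(\sqrt{\tfrac{p^{\zeta}}{\sigma h^{\lambda}}}\,v\Bigr)\Bigl(\sqrt{\tfrac{\sigma h^{\lambda}}{p^{\zeta}}}\,K\nabla u\cdot{\bm\mu}\Bigr)ds,
\]
apply Cauchy--Schwarz, use $|K|\le K_1$, and then sum over $E$, which yields a product bounded by the second and third components of $|||v|||_{H^2(P_h)}$ and $|||u|||_{H^2(P_h)}$. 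The symmetric companion term is handled the same way with the roles of $u$ and $v$ swapped.

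For group (iii), I would rewrite each edge integral on $\gamma_e=\partial E_i\cap\partial E_j$ using the definitions in (\ref{eq13}): $\langle v\rangle=\tfrac12(v_i+v_j)$ and $[K\nabla u\cdot{\bm n}]=K\nabla u_i\cdot{\bm n}-K\nabla u_j\cdot{\bm n}$. Distributing, each of the four resulting pieces is a trace of $v$ against a trace of $K\nabla u\cdot{\bm n}$ from a single element, after which the same weighted Cauchy--Schwarz trick as in (ii) applies and then summing over all edges in $\Gamma_{\textrm{int}}$ is controlled by summing the corresponding element-boundary $L^2$-norms (each edge contributing to at most two elements, giving a factor~$2$). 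For group (iv), a direct Cauchy--Schwarz on $\Gamma_{\textrm{int}}$ with the weight $\sigma h^{\lambda}/p^{\zeta}$ already built in, combined with the pointwise inequality $|[K\nabla w\cdot{\bm n}]|^2\le 2(|K\nabla w_i\cdot{\bm\mu}|^2+|K\nabla w_j\cdot{\bm\mu}|^2)$, reduces the jump norm to a sum of element-boundary $L^2$ norms and hence to the third component of $|||\cdot|||_{H^2(P_h)}$.

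The main obstacle is purely bookkeeping: one must verify that the weight $\sqrt{\sigma h^{\lambda}/p^{\zeta}}$ is inserted on the flux side and its reciprocal on the trace side in every one of groups (ii)--(iv), so that after Cauchy--Schwarz every summand is absorbed into $|||u|||_{H^2(P_h)}|||v|||_{H^2(P_h)}$ with a constant $N$ depending only on $K_1$ and $\sigma$ but independent of $h$ and $p$. No inverse or trace inequalities specific to the polynomial space $V^{hp}$ are needed, because $|||\cdot|||_{H^2(P_h)}$ already contains full $L^2(\partial E)$ control of both $v$ and $\nabla v\cdot{\bm\mu}$; this is precisely the point of enriching the $|||\cdot|||$ norm with the $H^2(P_h)$ trace terms.
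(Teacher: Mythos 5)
Your argument is correct: splitting $B(u,v)$ into the volume, element-boundary, interface, and stabilization groups and applying Cauchy--Schwarz with the weights $\sqrt{\sigma h^{\lambda}/p^{\zeta}}$ and its reciprocal matches each factor to a component of $|||\cdot|||_{H^2(P_h)}$, and the resulting constant depends only on $K_1$ and the (bounded) number of edges per element, hence not on $h$ or $p$. Note that the paper states Lemma~\ref{lem4} without giving any proof (the analogous result is in the cited work of Romkes, Prudhomme and Oden), so there is no argument in the text to compare against; yours is the standard one and correctly exploits the fact that the $H^2(P_h)$ norm already carries full $L^2(\partial E)$ control of both the trace and the normal flux, making trace or inverse inequalities unnecessary.
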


Next, we introduce an important inverse inequality (see
\cite{[2]A.Romkes}) between the spaces $H^1(P_h)$ and $H^2(P_h)$
for finite-dimensional functions $v_h \in V^{hp}$.
\begin{lem}\label{lem5}
Let the parameters in the norm $\|\cdot\|_{H^1(P_h)}$ be set
as $\lambda=\zeta=1$, then there exists a constant $C$
dependent of $\sigma$, such that
\begin{eqnarray}
|||v_h-\bar{v_h}|||_{H^2(P_h)} \leq C(\sigma)
\sqrt{p}\|v_h\|_{H^1(P_h)},\quad \forall v_h \in V^{hp},
\end{eqnarray}
where $\bar{v_h}$ denotes the piecewise average of $v_h$
\begin{eqnarray}\label{eq27}
\bar{v_h} = \sum_{E \in P_h} \bar{v_h}|_E,
\quad \bar{v_h}|_E = \frac{1}{|E|}\int_E v_h dx.
\end{eqnarray}
\end{lem}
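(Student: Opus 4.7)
The plan is to expand the definition of $|||\cdot|||_{H^2(P_h)}$ with $\lambda=\zeta=1$ and bound each of its three pieces separately on an element $E$, summing at the end. Since $\bar{v_h}|_E$ is a constant, $\nabla(v_h-\bar{v_h})=\nabla v_h$, so the gradient contributions reduce cleanly to $\|\nabla v_h\|_{L^2(E)}$, which is dominated by $\|v_h\|_{H^1(E)}$. The volume piece $\|v_h-\bar{v_h}\|_{\ast}^2$ is then handled by the equivalence of $\|\cdot\|_{\ast}$ with $\|\cdot\|_{H^1(E)}$ combined with the Poincar\'e--Wirtinger inequality $\|v_h-\bar{v_h}\|_{L^2(E)}\leq Ch_E\|\nabla v_h\|_{L^2(E)}$ (using the definition of $\bar{v_h}$ in (\ref{eq27})). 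This step costs no power of $p$.

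Next I would treat the jump-penalty--style term $\frac{p}{\sigma h}\|v_h-\bar{v_h}\|^2_{L^2(\partial E)}$. Using the standard scaled trace inequality
\[
\|w\|^2_{L^2(\partial E)}\leq C\bigl(h_E^{-1}\|w\|^2_{L^2(E)}+h_E\|\nabla w\|^2_{L^2(E)}\bigr),
\]
applied to $w=v_h-\bar{v_h}$, and then invoking Poincar\'e--Wirtinger once more, both terms become $O(h_E)\|\nabla v_h\|^2_{L^2(E)}$. The prefactor $\tfrac{p}{\sigma h}$ therefore produces a bound of order $\tfrac{C}{\sigma}\,p\,\|\nabla v_h\|^2_{L^2(E)}$.

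The decisive step is the normal-flux piece $\frac{\sigma h}{p}\|\nabla v_h\cdot\bm\mu\|^2_{L^2(\partial E)}$, and I expect this to be the main obstacle since it is where the $\sqrt{p}$ blow-up genuinely appears. Here I would apply Lemma \ref{lem3} with $w=v_h\in H^2(E)$ to get
\[
\|\nabla v_h\cdot\bm\mu\|^2_{L^2(\partial E)}\leq C\bigl\{h_E^{-1}\|\nabla v_h\|^2_{L^2(E)}+\|\nabla v_h\|_{L^2(E)}\|\nabla^2 v_h\|_{L^2(E)}\bigr\},
\]
and then use the $hp$ inverse inequality from \cite{[27]C.Schwab} (the same one invoked in the proof of Corollary \ref{cor1}), applied componentwise to the polynomial vector field $\nabla v_h$, to obtain $\|\nabla^2 v_h\|_{L^2(E)}\leq C(p^2/h_E)\|\nabla v_h\|_{L^2(E)}$. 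Substituting gives $\|\nabla v_h\cdot\bm\mu\|^2_{L^2(\partial E)}\leq C(p^2/h_E)\|\nabla v_h\|^2_{L^2(E)}$, and multiplying by $\sigma h/p$ yields $C\sigma\,p\,\|\nabla v_h\|^2_{L^2(E)}$.

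Collecting the three estimates and summing over $E\in P_h$ produces $|||v_h-\bar{v_h}|||^2_{H^2(P_h)}\leq C(\sigma)\,p\,\|v_h\|^2_{H^1(P_h)}$; taking square roots gives the claimed $\sqrt{p}$ factor. The essential subtlety is that the bound $C(\sigma)$ absorbs both the $1/\sigma$ from the $L^2(\partial E)$ penalty term and the $\sigma$ from the normal-flux term, so their relative scaling must be tracked to confirm $C(\sigma)$ is finite for every $\sigma>0$; the polynomial degree $p$ enters linearly only through the second-order inverse estimate, which is why a lower rate than that predicted in Theorem \ref{thm9} seems unavoidable by this argument.
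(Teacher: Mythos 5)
The paper does not actually prove Lemma \ref{lem5}: it is stated as an imported result with a pointer to Romkes, Prudhomme and Oden \cite{[2]A.Romkes}, so there is no in-text argument to compare yours against. Your proposal supplies a self-contained proof, and it is essentially correct. The decomposition into the three pieces of $|||\cdot|||_{H^2(P_h)}$ with $\lambda=\zeta=1$, the observation that $\nabla(v_h-\bar{v_h})=\nabla v_h$ since $\bar{v_h}|_E$ is constant, the Poincar\'e--Wirtinger bound $\|v_h-\bar{v_h}\|_{L^2(E)}\leq Ch_E\|\nabla v_h\|_{L^2(E)}$ combined with the scaled trace inequality for the $\frac{p}{\sigma h}\|\cdot\|^2_{L^2(\partial E)}$ term, and Lemma \ref{lem3} together with the Schwab inverse estimate applied componentwise to $\nabla v_h$ for the normal-flux term are exactly the right ingredients; they deliver $|||v_h-\bar{v_h}|||^2_{H^2(P_h)}\leq C(\sigma)\,p\,\|v_h\|^2_{H^1(P_h)}$, with the single power of $p$ entering, as you correctly identify, only through the second-derivative inverse estimate, and with $C(\sigma)$ absorbing $\max\{\sigma,\sigma^{-1}\}$. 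This is also in the spirit of the argument in the cited reference. Two small hypotheses you should state explicitly rather than leave tacit: the flux term produces a factor $h/h_E$ (the norm uses the global $h$ while the trace and inverse inequalities produce the local $h_E$), so quasi-uniformity of the family $\{P_h\}$ is needed; and the inverse inequality yields $p_E^2$ while the norm divides by the global minimum $p$, so a uniform polynomial degree (or $p_E\leq Cp$) must be assumed. Both assumptions are implicit throughout the paper, so neither constitutes a gap relative to the paper's own standard of rigor.
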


Similar to our proofs in the other sections, we need a theorem
on the interpolation error in the norm $\|\cdot\|_{H^2(P_h)}$.
As mentioned previously, here we do not use the Babuska and
Suri \cite{[23]I.BabuskaandM.Suri} interpolants but rather
the interpolants proposed by Riviere et al \cite{[24]B.RiviereandM.F.Wheeler}.

\begin{thm}\label{thm7}
Let $\varphi \in H^{\gamma_k}(E)$,
$\gamma_k \geq 2$, there exists $C >0$, independent of
$\varphi,p_k$ and $\gamma_k$, and interpolant
$\tilde{\pi}^{E}_{hp}(\varphi) \in P^{p_k}$, such that
\begin{eqnarray}\label{eq28}
\int_{\gamma \subset \partial E}\nabla
(\varphi - \tilde{\pi}^{E}_{hp}(\varphi)) \cdot {\bm \mu} ds = 0
\end{eqnarray}
and
\begin{eqnarray*}
&&\|\varphi - \tilde{\pi}^{E}_{hp}(\varphi)\|_{L^2(E)}
\leq C\frac{h^{\mu_k}}{p^{r_k-3/2}_{E}}\|\varphi\|_{H^{\gamma_k}(E)},\\
&&\|\nabla (\varphi - \tilde{\pi}^{E}_{hp}(\varphi))\|_{L^2(E)}
\leq C\frac{h^{\mu_k-1}}{p^{r_k-3/2}_{E}}\|\varphi\|_{H^{\gamma_k}(E)},\\
&&\|\nabla^2 (\varphi - \tilde{\pi}^{E}_{hp}(\varphi))\|_{L^2(E)}
\leq C\frac{h^{\mu_k-2}}{p^{r_k-2}_{E}}\|\varphi\|_{H^{\gamma_k}(E)},
\end{eqnarray*}
where $\mu_k = \min\{p_E+1,\gamma_k\},\ p_E \geq 2$.
\end{thm}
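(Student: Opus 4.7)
The plan is to build $\tilde{\pi}^{E}_{hp}(\varphi)$ as a perturbation of the standard Babuska--Suri interpolant $\pi^{E}_{hp}(\varphi)$ of Theorem~\ref{thm3}, correcting it by a polynomial of degree $\leq p_E$ that cancels the edge fluxes of the residual. After pulling back to the reference element $\hat{E}$ via the affine map $F_E$, all norms transform by fixed powers of $h_E$, so it suffices to do the construction on $\hat{E}$ and reinsert the scaling factors at the end.

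First, I would denote by $\hat{\pi}$ the standard Babuska--Suri interpolant on $\hat{E}$ and define, for each edge $\hat{\gamma}_i \subset \partial\hat{E}$, the edge functional
\begin{equation*}
J_i(\hat{\varphi}) \;=\; \int_{\hat{\gamma}_i}\nabla\bigl(\hat{\varphi}-\hat{\pi}(\hat{\varphi})\bigr)\cdot\hat{\bm{\mu}}\,ds.
\end{equation*}
Using a trace inequality on $\hat{E}$ together with the $H^1$ and $H^2$ bounds of Theorem~\ref{thm3} pulled back to $\hat{E}$, one controls $|J_i(\hat{\varphi})|$ in terms of $\|\hat{\varphi}\|_{H^{\gamma_k}(\hat{E})}$ with an explicit dependence on $p_E$.

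Next, I would construct three (or four, in the quadrilateral case) \emph{dual polynomials} $\hat{\psi}_1,\hat{\psi}_2,\hat{\psi}_3\in P^{p_E}(\hat{E})$ such that
\begin{equation*}
\int_{\hat{\gamma}_j}\nabla\hat{\psi}_i\cdot\hat{\bm{\mu}}\,ds \;=\; \delta_{ij},
\end{equation*}
and take the correction
\begin{equation*}
\hat{\psi} \;=\; \sum_i J_i(\hat{\varphi})\,\hat{\psi}_i,\qquad \tilde{\pi}^{E}_{hp}(\varphi) \;=\; \bigl(\hat{\pi}(\hat{\varphi})+\hat{\psi}\bigr)\circ F_E^{-1}.
\end{equation*}
By construction, property~(\ref{eq28}) holds on each edge of $E$. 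The pivotal step is to exhibit $\hat{\psi}_i$ with sharp $p$-dependent $L^2$, $H^1$, and $H^2$ bounds; a natural choice is to look within the space of edge-supported Lobatto/Legendre-type polynomials (or their integrals) that have a unit normal-flux moment on exactly one edge. Standard estimates for such polynomials give $\|\hat{\psi}_i\|_{L^2(\hat{E})}\lesssim p_E^{-1/2}$, $\|\nabla\hat{\psi}_i\|_{L^2(\hat{E})}\lesssim p_E^{-1/2}$, and $\|\nabla^2\hat{\psi}_i\|_{L^2(\hat{E})}\lesssim 1$ (the last one following from an inverse estimate in $P^{p_E}$). Combined with the $J_i$-bounds above, this produces exactly the $p_E^{-3/2}$ loss in the $L^2$ and $H^1$ estimates and the $p_E^{-2}$ factor in the $H^2$ estimate advertised in the statement.

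Finally, I would apply the triangle inequality between $\hat{\pi}(\hat{\varphi})$ and $\hat{\pi}(\hat{\varphi})+\hat{\psi}$, invoke Theorem~\ref{thm3} for the first term and the correction bounds for $\hat{\psi}$, and then push the result back to $E$ using the affine scaling $\|\nabla^m w\|_{L^2(E)} \sim h_E^{1-m}\|\hat{\nabla}^m \hat{w}\|_{L^2(\hat{E})}$ (in 2D). The three desired inequalities then follow with $\mu_k=\min\{p_E+1,\gamma_k\}$, the condition $p_E\geq 2$ being used precisely where the $H^2$ inverse inequality on the dual polynomials is invoked. I expect the only genuinely delicate step to be the verification of the sharp $p$-dependence for the $\hat{\psi}_i$; everything else is a scaling argument plus Theorem~\ref{thm3}.
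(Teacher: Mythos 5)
Note first that the paper does not actually prove Theorem \ref{thm7}: it is imported from the interpolants of Riviere, Wheeler and Girault \cite{[24]B.RiviereandM.F.Wheeler} (with the refinements of \cite{[28]B.RiviereandM.F.Wheeler,[29]S.PrudhommeandF.Pascal}), so any self-contained construction is necessarily ``different from the paper.'' Your architecture---correct the Babuska--Suri interpolant by $\sum_i J_i(\hat{\varphi})\hat{\psi}_i$, where the $\hat{\psi}_i$ form a dual basis for the edge normal-flux moments---is the right one and is essentially the construction used in those references. Your bound on the edge functionals is also correct, and it is in fact where the entire $p^{3/2}$ loss originates: the multiplicative trace inequality combined with Theorem \ref{thm3} gives $|J_i(\hat{\varphi})|\lesssim\|\nabla\hat{e}\|_{L^2(\hat{E})}^{1/2}\|\nabla\hat{e}\|_{H^1(\hat{E})}^{1/2}\lesssim p_E^{-(r_k-3/2)}\|\hat{\varphi}\|_{H^{\gamma_k}(\hat{E})}$.

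The genuine gap is in the claimed norm bounds for the dual polynomials, which are provably unattainable. If $\int_{\hat{\gamma}_i}\nabla\hat{\psi}_i\cdot\hat{\bm{\mu}}\,ds=1$, the same multiplicative trace inequality yields $1\lesssim\|\nabla\hat{\psi}_i\|_{L^2(\hat{E})}^{1/2}\bigl(\|\nabla\hat{\psi}_i\|_{L^2(\hat{E})}+\|\nabla^2\hat{\psi}_i\|_{L^2(\hat{E})}\bigr)^{1/2}$, so your bounds $\|\nabla\hat{\psi}_i\|_{L^2}\lesssim p_E^{-1/2}$ and $\|\nabla^2\hat{\psi}_i\|_{L^2}\lesssim 1$ would force $1\lesssim p_E^{-1/4}$, a contradiction for large $p_E$; moreover the divergence theorem ($\int_{\hat{E}}\Delta\hat{\psi}_i=\sum_j\delta_{ij}=1$) already forces $\|\nabla^2\hat{\psi}_i\|_{L^2}\gtrsim 1$ and hence $\|\nabla\hat{\psi}_i\|_{L^2}\gtrsim 1$ as well. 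So no Lobatto/Legendre-type choice delivers the decay you assert, and the statement that these decays ``produce exactly the $p^{-3/2}$ loss'' misattributes the source of that loss. The error is reparable, and the repair simplifies the proof: take $p$-independent dual polynomials of fixed low degree (available for $p_E\geq 2$), so that $\|\hat{\psi}_i\|_{H^2(\hat{E})}\lesssim 1$; the correction then contributes $O(|J_i|)=O(p_E^{-(r_k-3/2)})$ to all three norms, which matches the stated $L^2$ and $H^1$ rates and is dominated by the $p_E^{-(r_k-2)}$ term from Theorem \ref{thm3} in the $H^2$ estimate. Two further points you gloss over: an affine map does not preserve normals, so the physical condition (\ref{eq28}) pulls back to a moment of a skewed directional derivative and the dual basis must be adapted per element (uniformly by shape regularity); and $\hat{\psi}_i$ is only determined up to an additive constant, which must be normalized before any $L^2$ bound is claimed.
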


Again, by extending the corresponding local interpolant
$\tilde{\pi}^{E}_{hp}$ equal to zero outside of each
$E \in P_h$, we can define a global interpolant on $V$
\begin{eqnarray}\label{eq25}
\tilde{\Pi}_{hp}:V \rightarrow V^{hp},\quad \tilde{\Pi}_{hp}(u)
= \sum_{E \in P_h} \tilde{\pi}^{E}_{hp}(u|_E),\quad u \in V.
\end{eqnarray}

\begin{thm}\label{thm8}
Let $u \in H^2(\Omega) \cap V,
\tilde{\Pi}^{E}_{hp}(u) \in V^{hp}$ be the interpolant of
$u$ (\ref{eq25}) and let the stabilization parameters
$\sigma >0$ and $\lambda ,\zeta \geq 0$, then there
exists $C(\sigma)>0$, independent of $u,h$, and $p$ such
that the interpolation error can be bounded as follows
\begin{eqnarray*}
|||u-\tilde{\Pi}_{hp}(u)|||_{H^2(P_h)} \leq C(\sigma)
\frac{h^{\mu^{\ast \ast}}}{p^{\gamma^{\ast \ast}}}
\|u\|_{H^{\gamma_k}(P_h)},
\quad \gamma_k \geq 2,\ p \geq 2,
\end{eqnarray*}
where
\begin{eqnarray*}
&&\mu^{\ast \ast}=\min \big\{\mu-1,\mu-\frac12-\frac{\lambda}2,
\mu-\frac32+\frac\lambda 2 \big\},\\
&&\gamma^{\ast \ast}=\min \big\{\gamma-\frac32,\gamma-\frac32-\frac{\theta}2,
\gamma-\frac74+\frac{\zeta}2 \big\},\\
&&\mu=\min \big\{p+1,\gamma\big\},
\end{eqnarray*}
and where $\gamma=\min_{E \in P_h}(\gamma_k)$.
\end{thm}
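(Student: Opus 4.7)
The plan is to decompose $|||u-\tilde{\Pi}_{hp}(u)|||^2_{H^2(P_h)}$ into its three defining contributions and estimate each one separately by combining suitable trace inequalities with the local interpolation bounds of Theorem \ref{thm7}. Writing $\eta_h:=u-\tilde{\Pi}_{hp}(u)$, the three pieces to control are $\sum_{E}\|\eta_h\|^2_\ast$, $\sum_{E}\tfrac{p^\zeta}{\sigma h^\lambda}\|\eta_h\|^2_{L^2(\partial E)}$, and $\sum_{E}\tfrac{\sigma h^\lambda}{p^\zeta}\|\nabla\eta_h\cdot\bm{\mu}\|^2_{L^2(\partial E)}$; each should match one of the three slots in the $\min$ defining $\mu^{\ast\ast}$ and $\gamma^{\ast\ast}$.

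The interior piece is the easiest: since $\|\cdot\|_\ast$ is equivalent to $\|\cdot\|_{H^1(E)}$ (with constants depending only on $K_0$ and $K_1$), it reduces to the first two estimates of Theorem \ref{thm7}, and the gradient contribution dominates, yielding the rate $h^{\mu-1}/p^{\gamma-3/2}$ that feeds the first slot. For the boundary-value piece I would apply the standard multiplicative trace inequality $\|w\|^2_{L^2(\partial E)}\leq C\bigl(h_E^{-1}\|w\|^2_{L^2(E)}+\|w\|_{L^2(E)}\|\nabla w\|_{L^2(E)}\bigr)$ with $w=\eta_h$, and then substitute the $L^2$ and $H^1$ bounds of Theorem \ref{thm7}; both resulting contributions collapse to order $h^{2\mu-1}/p^{2\gamma-3}$, so after multiplication by $p^\zeta/h^\lambda$ one reaches a contribution of order $h^{\mu-1/2-\lambda/2}/p^{\gamma-3/2-\zeta/2}$, matching the second slot.

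For the normal-flux piece I would invoke Lemma \ref{lem3} directly (applicable since $\eta_h\in H^2(E)$ whenever $u\in H^{\gamma_k}(E)$ with $\gamma_k\ge 2$), obtaining $\|\nabla\eta_h\cdot\bm{\mu}\|^2_{L^2(\partial E)}\leq C\bigl(h_E^{-1}\|\nabla\eta_h\|^2_{L^2(E)}+\|\nabla\eta_h\|_{L^2(E)}\|\nabla^2\eta_h\|_{L^2(E)}\bigr)$. Plugging in the $H^1$ and $H^2$ estimates of Theorem \ref{thm7} one sees that the cross term dominates in $p$, and multiplication by $h^\lambda/p^\zeta$ yields a contribution of order $h^{\mu-3/2+\lambda/2}/p^{\gamma-7/4+\zeta/2}$, matching the third slot. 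Summing over $E\in P_h$ and taking the worst of the three rates then gives the desired bound proportional to $\|u\|_{H^{\gamma_k}(P_h)}$.

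The main obstacle is the careful bookkeeping of the $p$-exponents: the Riviere--Wheeler interpolant of Theorem \ref{thm7} pays an extra $p^{3/2}$ penalty in its $L^2$ and $H^1$ estimates relative to the Babuska--Suri interpolant of Theorem \ref{thm3}, and each application of a multiplicative trace inequality threatens to compound this loss, so one must verify at each step that the $h_E^{-1}$ contribution and the cross term really do balance as expected. The vanishing--mean--flux property (\ref{eq28}) of $\tilde{\pi}^{E}_{hp}$ is \emph{not} needed for the present interpolation estimate, but it will be the decisive ingredient in the ensuing optimal--$h$ error estimate, where consistency terms on $\Gamma_{\textrm{int}}$ must cancel against jumps of the approximation error.
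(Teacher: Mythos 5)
Your proposal is correct and follows essentially the same route as the paper: split the $|||\cdot|||_{H^2(P_h)}$ norm into its three terms, apply the multiplicative trace inequality to the $L^2(\partial E)$ term and Lemma \ref{lem3} to the normal-flux term, and then insert the Riviere--Wheeler interpolation bounds of Theorem \ref{thm7}; your exponent bookkeeping reproduces all three slots of $\mu^{\ast\ast}$ and $\gamma^{\ast\ast}$. The only discrepancy is that your middle slot comes out as $\gamma-\tfrac32-\tfrac{\zeta}{2}$ rather than the stated $\gamma-\tfrac32-\tfrac{\theta}{2}$, but since the $|||\cdot|||_{H^2(P_h)}$ norm involves $\zeta$ and not $\theta$, this appears to be a typo in the theorem statement and your version is the consistent one.
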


\begin{proof}
Recalling the definition of the norm $|||\cdot|||_{H^2(P_h)}$,
substituting trace inequality and Lemma \ref{lem3}, we can get
\begin{eqnarray*}
|||\eta_h|||^2_{H^2(P_h)} &\leq&  \sum_{E \in P_h} \Big\{\|\eta_h\|^2_{\ast}
+\frac{h^{-(\lambda+1)}}{\sigma p^{-\zeta}}\Big(\|\eta_h\|^2_{L^2(E)}
+h\|\nabla \eta_h\|^2_{L^2(E)}\|\eta_h\|_{L^2(E)}\Big){}\\
&& +\frac{\sigma h^{\lambda-1}}{p^\zeta}
\Big(\|\nabla \eta_h\|^2_{L^2(E)}+h\|\nabla \eta_h\|_{L^2(E)}
\|\nabla^2 \eta_h\|_{L^2(E)}\Big)\Big\}.{}
\end{eqnarray*}
Applying  Theorem \ref{thm7}, we complete
the proof.
\end{proof}

\begin{thm}\label{thm10}
Given $\sigma >0$. Let $u \in H^2(\Omega) \cap V$
be the exact solution to the VBVP (\ref{eq2}), $u_h \in V^{hp}$ be
a discrete approximation (\ref{eq21}) and let the stabilization
parameter be of order $O(h/p)$(i.e.,$\lambda =1$ and $\zeta =1$),
then there exists $C(\sigma) \geq 0$ such that
\begin{eqnarray*}
\|u-u_h\|_{H^1(P_h)} \leq C(\sigma) \frac{h^{\mu-1}}{p^{\gamma-5/2}}
\|u\|_{H^{\gamma_k}(P_h)},\quad p \geq 2,
\end{eqnarray*}
where $\mu = \min \{p+1,\gamma\}$.
\end{thm}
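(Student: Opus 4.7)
The plan mirrors Theorem \ref{thm9} but exchanges two ingredients to unlock the optimal $h$-rate in the $\|\cdot\|_{H^1(P_h)}$ norm: the Riviere--Wheeler interpolant $\tilde{\Pi}_{hp}$ of Theorem \ref{thm7}, with its edge orthogonality (\ref{eq28}), replaces the Babuska--Suri interpolant, and a coercivity estimate on $V^{hp}$ replaces the full $|||\cdot|||$ inf-sup bound. I split $u-u_h = \tilde{\eta}_h + \tilde{\xi}_h$ with $\tilde{\eta}_h = u - \tilde{\Pi}_{hp}(u)$ and $\tilde{\xi}_h = \tilde{\Pi}_{hp}(u) - u_h \in V^{hp}$, so that by the triangle inequality the interpolation piece $\|\tilde{\eta}_h\|_{H^1(P_h)}$ is bounded directly by Theorem \ref{thm7} at the target rate $h^{\mu-1}/p^{\gamma-3/2}$, and the real work is in bounding $\|\tilde{\xi}_h\|_{H^1(P_h)}$.

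For the discrete remainder, inspecting the bilinear form (\ref{eq5}) with $u=v$ shows that the two antisymmetric boundary integrands cancel pointwise, leaving
$$B(v,v) = \sum_{E\in P_h} \|v\|_\ast^2 + \sigma\frac{h^\lambda}{p^\zeta}\|[K\nabla v \cdot {\bm n}]\|_{L^2(\Gamma_{\textrm{int}})}^2 \geq C\,\|v\|_{H^1(P_h)}^2,$$
using $K\geq K_0>0$ and the equivalence of $\|\cdot\|_\ast$ with $\|\cdot\|_{H^1(E)}$. Combined with the Galerkin orthogonality (\ref{eq23}), this yields
$$\|\tilde{\xi}_h\|_{H^1(P_h)}^2 \leq C\,B(\tilde{\xi}_h,\tilde{\xi}_h) = -C\,B(\tilde{\eta}_h,\tilde{\xi}_h),$$
so the task reduces to estimating $|B(\tilde{\eta}_h,\tilde{\xi}_h)|$.

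I then split $\tilde{\xi}_h = (\tilde{\xi}_h - \bar{\tilde{\xi}}_h) + \bar{\tilde{\xi}}_h$ via (\ref{eq27}). On the mean-free part, continuity (Lemma \ref{lem4}) combined with the inverse inequality (Lemma \ref{lem5}) yields
$$|B(\tilde{\eta}_h,\tilde{\xi}_h-\bar{\tilde{\xi}}_h)| \leq N\,|||\tilde{\eta}_h|||_{H^2(P_h)} \cdot C(\sigma)\sqrt{p}\,\|\tilde{\xi}_h\|_{H^1(P_h)}.$$
On the piecewise-constant part, the Riviere--Wheeler orthogonality (\ref{eq28}) is the decisive tool: since $\nabla \bar{\tilde{\xi}}_h \equiv 0$ on each element, after regrouping $\sum_E \int_{\partial E}\bar{\tilde{\xi}}_h\,K\nabla\tilde{\eta}_h\cdot{\bm \mu}\,ds$ into interior edge jump/average pairs, the surviving flux contributions collapse to $\int_\gamma \nabla\tilde{\eta}_h\cdot{\bm \mu}\,ds$ and vanish by (\ref{eq28}); only a harmless mass term $\sum_E\int_E \tilde{\eta}_h\bar{\tilde{\xi}}_h\,dx$ remains, controlled by Theorem \ref{thm7}. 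Dividing out one power of $\|\tilde{\xi}_h\|_{H^1(P_h)}$ and invoking Theorem \ref{thm8} with $\lambda=\zeta=1$ (so that $\mu^{\ast\ast}=\mu-1$ and $\gamma^{\ast\ast}=\gamma-2$) produces
$$\|\tilde{\xi}_h\|_{H^1(P_h)} \leq C(\sigma)\sqrt{p}\,|||\tilde{\eta}_h|||_{H^2(P_h)} \leq C(\sigma)\,\frac{h^{\mu-1}}{p^{\gamma-5/2}}\|u\|_{H^{\gamma_k}(P_h)},$$
and the triangle inequality closes the proof.

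The main obstacle I expect is the careful bookkeeping in $B(\tilde{\eta}_h,\bar{\tilde{\xi}}_h)$: rewriting the boundary sum $\sum_E \int_{\partial E}\bar{\tilde{\xi}}_h\,K\nabla\tilde{\eta}_h\cdot{\bm \mu}\,ds$ through the standard jump/average identity so that each surviving contribution matches the hypothesis of (\ref{eq28}). When the diffusion coefficient $K(x)$ varies within an element, $K$ does not factor out of the edge integral, and a standard interpolation-based perturbation argument is required to absorb the residue into the mass term without degrading the predicted rate.
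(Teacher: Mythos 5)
Your proposal follows essentially the same route as the paper: the same splitting $u-u_h=\tilde{\eta}_h+\tilde{\xi}_h$ via the Riviere--Wheeler interpolant, coercivity of $B$ on $V^{hp}$ plus Galerkin orthogonality, the decomposition $\tilde{\xi}_h=(\tilde{\xi}_h-\bar{\tilde{\xi}}_h)+\bar{\tilde{\xi}}_h$ handled by Lemma \ref{lem4} with Lemma \ref{lem5} and by the edge orthogonality (\ref{eq28}) respectively, and the conclusion via Theorem \ref{thm8} with $\lambda=\zeta=1$. Your closing caveat about $K(x)$ not factoring out of the edge integrals in $B(\tilde{\eta}_h,\bar{\tilde{\xi}}_h)$ is well taken --- the paper silently drops $K$ from the flux terms at that step --- but this does not change the overall argument.
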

\begin{proof}
Given the interpolant $\tilde{\Pi}_{hp}(u)$ in (\ref{eq25}),
using the triangle inequality, we can obtain
\begin{eqnarray}\label{eq30}
\|u-u_h\|_{H^1(P_h)} \leq \|\eta_h\|_{H^1(P_h)}+\|\xi_h\|_{H^1(P_h)}.
\end{eqnarray}
From (\ref{eq5}) and (\ref{eq18}), it follows that
\begin{eqnarray}
\|\xi_h\|^2_{H^1(P_h)} \leq B(\xi_h,\xi_h).
\end{eqnarray}
Using the orthogonality property (\ref{eq23}) and the linearity
of $B(\cdot,\cdot)$, this can be rewritten as
\begin{eqnarray}
\|\xi_h\|^2_{H^1(P_h)} \leq B(\eta_h,\xi_h)=B(\eta_h,\xi_h-\bar{\xi_h})
+B(\eta_h,\bar{\xi_h}),
\end{eqnarray}
where $\bar{\xi_h}$ denotes the piecewise average (\ref{eq27})
of $\xi_h$. Now, applying Lemma \ref{lem4} to the first
term in the right hand side, we get
\begin{eqnarray}
\|\xi_h\|^2_{H^1(P_h)} \leq C|||\xi_h-\bar{\xi_h}|||_{H^2(P_h)}
\|\eta_h\|_{H^2(P_h)}+B(\eta_h,\bar{\xi_h}).
\end{eqnarray}
Applying the inverse inequality of Lemma \ref{lem5},
we can rewrite the above inequality as
\begin{eqnarray}\label{eq29}
\|\xi_h\|^2_{H^1(P_h)} \leq C(\sigma)\sqrt{p}\|\xi_h\|_{H^1(P_h)}
|||\eta_h|||_{H^2(P_h)}+B(\eta_h,\bar{\xi_h}).
\end{eqnarray}
As we shall now see, the term $B(\eta_h,\bar{\xi_h})$ can be
bounded in terms of $\|\xi_h\|_{H^1(P_h)}$ as well, due to
the special property (\ref{eq28}) of the interpolant
$\tilde{\Pi}_{hp}$. By expanding the term $B(\eta_h,\bar{\xi_h})$,
we get
\begin{eqnarray*}
B(\eta_h,\bar{\xi_h})=\sum_{E \in P_h}\Big(\int_E \eta \bar{\xi_h}dx
+\int_{\partial E}\bar{\xi_h}\nabla \eta_h \cdot {\bm \mu} ds\Big)
-\int_{\Gamma_\textrm{int}} \langle \bar{\xi_h} \rangle
[\nabla \eta_h \cdot {\bm n}]ds.
\end{eqnarray*}
Now, applying the property (\ref{eq28}), gives
\begin{eqnarray*}
B(\eta_h,\bar{\xi_h})=\sum_{E \in P_h} \int_E \eta_h \bar{\xi_h}dx
\leq \|\eta_h\|_{L^2(\Omega)}\|\bar{\xi_h}\|_{L^2(\Omega)}
\leq C\|\eta_h\|_{L^2(\Omega)}\|\xi_h\|_{H^1(P_h)}.
\end{eqnarray*}
Back substitution of this result into (\ref{eq29}) and
(\ref{eq30}), then yields
\begin{eqnarray*}
\|u-u_h\|_{H^1(P_h)} \leq C(\sigma)\sqrt{p}
|||\eta_h|||_{H^2(P_h)}.
\end{eqnarray*}
Next, recalling the interpolation Theorem \ref{thm8}, we get
\begin{eqnarray*}
\|u-u_h\|_{H^1(P_h)} \leq C(\sigma)
\frac{h^{\mu^{\ast \ast}}}{p^{\gamma^{\ast \ast}-1/2}}
\sqrt{\sum_{E \in P_h} \|u\|^2_{H^{\gamma_k}(E)}}.
\end{eqnarray*}
Since $\lambda = \zeta =1$, we know that $\mu^{\ast \ast}
=\mu -1$ and $\gamma^{\ast \ast} = \gamma -2$.
\end{proof}
\begin{remark}
If we combine the results of Theorem \ref{thm9} and \ref{thm10},
we can conclude that for a stabilization term of order $O(h/p^2)$
and for $p \geq 2$,  the convergence rates are of order $\mu -1$
and $\gamma -2$ for $h$ and $p$ convergence, respectively.
\end{remark}

Now, we prove the error estimate in the $L^2$ norm. We will apply
the Aubin-Nitsche lift technique used in the analysis of the
classical finite element method to the DG method. First we
introduce an important result (see \cite{[5]B.Riviere}) as follows.
\begin{thm}\label{thm11}
Let $v \in H^{\gamma_k}(E)$ for $\gamma_k > 1$. Let $P_E \geq 0$
be an integer. There exists a constant $C>0$ independent of
$v$ and $h_E$ and a function $\tilde{v} \in P^{p_E}(E)$ such that
for all $0 \leq s \leq \gamma_k$,
\begin{eqnarray}
\|v-\tilde{v}\|_{H^s(E)} \leq Ch_E^{\min\{p_E+1,\gamma_k\}-s}
\|v\|_{H^{\gamma_k}(E)}.
\end{eqnarray}
\end{thm}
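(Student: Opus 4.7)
The plan is to prove this by scaling to a fixed reference element and applying the Bramble--Hilbert / Deny--Lions lemma, which is the textbook route for finite-element approximation estimates. First, I would transform to the reference element $\hat{E}$ by setting $\hat{v}=v\circ F_E$. Since $F_E$ is affine with Jacobian of norm $\simeq h_E$, the standard change-of-variable identities give the affine equivalence
\begin{equation*}
|v|_{H^\ell(E)} \simeq h_E^{d/2-\ell}\, |\hat{v}|_{H^\ell(\hat E)}
\end{equation*}
for every integer $\ell\geq 0$, and by Sobolev interpolation for every real $\ell\geq 0$. Hence the problem reduces to constructing a polynomial $\hat{\tilde v}\in P^{p_E}(\hat E)$ whose error is controlled in each $H^s(\hat E)$ norm by $\|\hat v\|_{H^{\gamma_k}(\hat E)}$; pulling back will then produce the $h_E^{\mu_k-s}$ factor, where $\mu_k=\min\{p_E+1,\gamma_k\}$.

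On the fixed reference element, I would take $\hat{\tilde v}$ to be a bounded, polynomial-preserving projector onto $P^{p_E}(\hat E)$ --- for instance the $L^2$-orthogonal projection, or the truncated Jacobi/Legendre expansion used by Babuska and Suri. The two properties needed are reproduction of polynomials of degree $\leq p_E$ and continuity $H^{\gamma_k}(\hat E)\to H^s(\hat E)$ with a constant depending only on $\hat E$. Since the error operator $\hat v\mapsto \hat v-\hat{\tilde v}$ then annihilates $P^{p_E}(\hat E)$ and is bounded on $H^{\gamma_k}(\hat E)$, the Deny--Lions lemma gives
\begin{equation*}
\|\hat v-\hat{\tilde v}\|_{H^s(\hat E)}
\leq C \inf_{q\in P^{p_E}(\hat E)}\|\hat v-q\|_{H^{\gamma_k}(\hat E)}
\leq C\,|\hat v|_{H^{\mu_k}(\hat E)}.
\end{equation*}
Combining this bound with the scaling of the first step yields precisely the claimed estimate $\|v-\tilde v\|_{H^s(E)}\leq C\,h_E^{\mu_k-s}\|v\|_{H^{\gamma_k}(E)}$.

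The main obstacle will be showing that all constants are truly independent of $h_E$ uniformly across the real parameter $s\in[0,\gamma_k]$. Extending the affine scaling identity from integer to arbitrary real $s$ requires Sobolev-space interpolation (e.g.\ the K-method), and one must then verify that the polynomial-preserving projector remains bounded on the interpolation scale between $L^2(\hat E)$ and $H^{\gamma_k}(\hat E)$. On the other hand, since the theorem asserts no $p_E$-explicit dependence of the constant, the more delicate Babuska--Suri and Schwab machinery that was needed in Theorems \ref{thm3} and \ref{thm7} to extract sharp $p$-rates is not required here; the argument is essentially the $h$-version of Ciarlet's classical approximation estimate.
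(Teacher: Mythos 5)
The paper does not prove this statement at all: it is imported verbatim from Rivi\`ere's monograph \cite{[5]B.Riviere} (the text introduces it with ``First we introduce an important result (see [5])''), so there is no internal proof to compare against. Your plan --- affine pull-back to the reference element, a bounded polynomial-preserving projector, a Deny--Lions/Bramble--Hilbert bound, push-forward --- is exactly the standard route by which this lemma is established in the cited literature, and it is the right one here since the constant is not required to be $p_E$-explicit.

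There is, however, one step in your chain that fails as written. The inequality
\begin{equation*}
\inf_{q\in P^{p_E}(\hat E)}\|\hat v-q\|_{H^{\gamma_k}(\hat E)}\;\leq\; C\,|\hat v|_{H^{\mu_k}(\hat E)},
\qquad \mu_k=\min\{p_E+1,\gamma_k\},
\end{equation*}
is false whenever $\gamma_k>p_E+1$: every $q\in P^{p_E}(\hat E)$ has vanishing derivatives of order $p_E+1$ and higher, so $|\hat v-q|_{H^{j}(\hat E)}=|\hat v|_{H^{j}(\hat E)}$ for all $j\geq p_E+1$, and the left-hand side is bounded below by $|\hat v|_{H^{\gamma_k}(\hat E)}$, which is not controlled by $|\hat v|_{H^{\mu_k}(\hat E)}$. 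Retreating to the weaker conclusion $\|\hat v-\hat{\tilde v}\|_{H^s(\hat E)}\leq C\|\hat v\|_{H^{\gamma_k}(\hat E)}$ with the full norm on the right does not save the argument, because the low-order seminorms inside that norm scale like $h_E^{j-d/2}$ with small $j$, and the pull-back then yields only $h_E^{-s}$ instead of $h_E^{\mu_k-s}$. The repair is standard: run Bramble--Hilbert at regularity level $\mu_k$ rather than $\gamma_k$. The error operator $I-\hat\Pi$ annihilates $P^{\mu_k-1}(\hat E)\subseteq P^{p_E}(\hat E)$ and is bounded from $H^{\mu_k}(\hat E)$ to $H^{s}(\hat E)$ for $0\leq s\leq\mu_k$ (boundedness of $\hat\Pi$ from $L^2$ already suffices, its range being finite dimensional), so
\begin{equation*}
\|\hat v-\hat{\tilde v}\|_{H^{s}(\hat E)}\;\leq\; C\inf_{q\in P^{\mu_k-1}(\hat E)}\|\hat v-q\|_{H^{\mu_k}(\hat E)}\;\leq\; C\,|\hat v|_{H^{\mu_k}(\hat E)},
\end{equation*}
and the affine scaling then gives $h_E^{\mu_k-s}|v|_{H^{\mu_k}(E)}\leq h_E^{\mu_k-s}\|v\|_{H^{\gamma_k}(E)}$, which is the claim. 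For non-integer $\gamma_k$, and for the range $\mu_k<s\leq\gamma_k$ that the statement formally permits, the interpolation-space bookkeeping you flag is indeed needed; but the paper only ever invokes the lemma with $s=0,1$, so that is a side issue.
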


We assume that the domain is convex and that the solution to
the dual problem
\begin{eqnarray*}
- \nabla \cdot (K(x)\nabla \phi) + \phi &=& e_h,\ {\rm in}\ \Omega,\\
\phi &=& 0,\ {\rm on}\ \partial \Omega
\end{eqnarray*}
belongs to $H^2(\Omega)$ with continuous dependence on $e_h=u-u_h$,
\begin{eqnarray}\label{eq34}
\|\phi\|_{H^2(\Omega)} \leq C\|e_h\|_{L^2(\Omega)}.
\end{eqnarray}
Then, we have
\begin{eqnarray*}
&&\quad \|e_h\|^2_{L^2(\Omega)}=\int_{\Omega}(e_h)^2dx
=\int_{\Omega}(- \nabla \cdot (K(x)\nabla \phi) + \phi)e_hdx\\
&&=\sum_{E \in P_h}\int_E(K(x)\nabla\phi \cdot \nabla e_h+\phi e_h)dx
-\sum_{E \in P_h}\int_{\partial E}(K(x)\nabla \phi \cdot {\bm \mu})e_hds\\
&&=\sum_{E \in P_h}\int_E (K(x)\nabla\phi \cdot \nabla e_h+\phi e_h)dx
-\int_{\Gamma_{\rm int}}\langle K(x)\nabla \phi \cdot {\bm \mu}\rangle [e_h]ds,
\end{eqnarray*}
because of the regularity of $\phi$ we know that the jumps
$[K(x)\nabla \phi \cdot {\bm \mu}]|_{\gamma_e}=0$.
Now subtracting the orthogonality property (\ref{eq33}) from the
equation above, we get
\begin{eqnarray}
\|e_h\|^2_{L^2(\Omega)}&=&\sum_{E \in P_h}\int_E
(K(x)\nabla(\phi-v_h) \cdot \nabla e_h+ (\phi-v_h)e_h)dx{}\nonumber\\
&&{}-\int_{\Gamma_{\rm int}}\langle K(x)\nabla(\phi-v_h) \cdot {\bm \mu} \rangle [e_h]ds
-\int_{\Gamma_{\rm int}}\langle K(x)\nabla e_h \cdot {\bm n}\rangle [v_h]ds{}\nonumber\\
&&{}-\int_{\Gamma_{\rm int}} \sigma\frac{h^\lambda}{p^\zeta}
[K(x)\nabla e_h\cdot {\bm n}][K(x)\nabla v_h\cdot {\bm n}]ds{}\label{eq36}\\
&=&A_1 + A_2.\nonumber
\end{eqnarray}
We choose $v_h = \tilde{\phi}$, a continusous interpolant of $\phi$
of degree $p_E$, and assume that such an interpolant exists. In this
case, the first term is easily bounded using Cauchy-Schwarz
inequality and the Theorem \ref{thm11},
\begin{eqnarray*}
A_1&=&\sum_{E \in P_h}\int_E
(K(x)\nabla(\phi-\tilde{\phi})\cdot \nabla e_h
+ (\phi-\tilde{\phi})e_h)dx\\
&\leq& C\|\phi\|_{H^1(\Omega)} |||e_h|||
\leq Ch\|\phi\|_{H^2(\Omega)} |||e_h|||.
\end{eqnarray*}
The rest terms except $A_1$ in the right hand side (\ref{eq36}), yields
\begin{eqnarray*}
|A_2| \leq |B(e_h,\phi-\tilde{\phi})|
\leq M |||e_h|||\,|||\phi-\tilde{\phi}|||.
\end{eqnarray*}
Therefore, by the Theorem \ref{thm10} and using
the bound (\ref{eq34}), we obtain
\begin{eqnarray*}
\|e_h\|^2_{L^2(\Omega)} \leq C \frac{h^{\mu}}{p^{\gamma-5/2}}
\|e_h\|_{L^2(\Omega)} \|u\|_{H^{\gamma_k}(P_h)}.
\end{eqnarray*}
Then, we have the following theorem:
\begin{thm}\label{thm12}
Assume that Theorem \ref{thm10} holds. Then there exists a constant
$C(\sigma)$ independent of $h$ and $p$, but dependent of $\sigma$,
such that
\begin{eqnarray*}
\|u-u_h\|_{L^2(\Omega)} \leq C(\sigma) \frac{h^{\mu}}{p^{\gamma-5/2}}
\|u\|_{H^{\gamma_k}(P_h)},\quad p \geq 2,
\end{eqnarray*}
where $\mu = \min \{p+1,\gamma\}$.
\end{thm}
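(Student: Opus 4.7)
The plan is to use the Aubin--Nitsche duality argument, which is the natural tool for bootstrapping an $H^1$-type estimate into an $L^2$-estimate. Set $e_h = u - u_h$ and introduce the dual problem $-\nabla\cdot(K\nabla\phi) + \phi = e_h$ in $\Omega$, $\phi = 0$ on $\partial\Omega$. Assuming $\Omega$ is convex so that elliptic regularity gives $\phi \in H^2(\Omega)$ with $\|\phi\|_{H^2(\Omega)} \leq C\|e_h\|_{L^2(\Omega)}$, I would test this dual equation against $e_h$, integrate over $\Omega$, and perform element-wise integration by parts. Because $\phi \in H^2(\Omega)$, the jumps $[K\nabla\phi\cdot{\bm\mu}]$ vanish on $\Gamma_{\mathrm{int}}$, so only a single interior-interface term of the form $\langle K\nabla\phi\cdot{\bm\mu}\rangle\,[e_h]$ survives.

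Next I would invoke Galerkin orthogonality (\ref{eq23}): since $B(e_h,v_h) = 0$ for every $v_h\in V^{hp}$, I can freely subtract $B(e_h,v_h)$ from the identity obtained above. The natural choice is $v_h = \tilde\phi$, a globally continuous interpolant of $\phi$ of piecewise polynomial degree $p_E$ whose existence and approximation properties are guaranteed by Theorem \ref{thm11}. After this subtraction, the right-hand side reassembles (up to the boundary/penalty terms that are already built into $B$) into a quantity that splits naturally as $A_1 + A_2$, exactly as displayed in (\ref{eq36}), where $A_1$ collects the volumetric contributions and $A_2$ collects everything involving jumps, averages, and the stabilization term.

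For $A_1$, a direct application of Cauchy--Schwarz, together with Theorem \ref{thm11} applied with $\gamma_k = 2$ and $s=0,1$, gives $|A_1| \leq C h \|\phi\|_{H^2(\Omega)}\,|||e_h|||$. For $A_2$ I would recognize the remaining expression as the tail of $B(e_h,\phi - \tilde\phi)$, so the continuity of $B$ on $H^2(P_h)\times H^2(P_h)$ furnished by Lemma \ref{lem4} yields $|A_2| \leq N\,|||e_h|||_{H^2(P_h)}\,|||\phi - \tilde\phi|||_{H^2(P_h)}$, and the approximation Theorem \ref{thm11} applied to $\phi\in H^2(\Omega)$ absorbs the $|||\phi-\tilde\phi|||_{H^2(P_h)}$ factor into $Ch\|\phi\|_{H^2(\Omega)}$. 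Combining these with the regularity bound (\ref{eq34}) produces
\begin{equation*}
\|e_h\|_{L^2(\Omega)}^2 \;\leq\; C\,h\,|||e_h|||\,\|e_h\|_{L^2(\Omega)}.
\end{equation*}
Dividing by $\|e_h\|_{L^2(\Omega)}$ and inserting the $H^1$-type error bound from Theorem \ref{thm10} gives the claimed rate $h^{\mu}/p^{\gamma-5/2}$, since the duality argument contributes the extra factor of $h$.

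The main obstacle I expect is the bookkeeping around $A_2$: the term involves jumps of $K\nabla e_h\cdot{\bm n}$ and the stabilization piece $\sigma(h^\lambda/p^\zeta)[K\nabla e_h\cdot{\bm n}][K\nabla v_h\cdot{\bm n}]$, and one has to verify that, after the orthogonality subtraction, these really do recombine into $B(e_h,\phi - \tilde\phi)$ so that Lemma \ref{lem4} applies cleanly. A related subtlety is choosing $\tilde\phi$ to be globally continuous (so its jumps vanish) while still satisfying element-wise polynomial interpolation estimates; this is exactly why Theorem \ref{thm11} is stated in its present form. Once these two points are handled, the remaining estimates are routine Cauchy--Schwarz and direct substitution of previously established bounds.
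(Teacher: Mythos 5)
Your proposal follows essentially the same Aubin--Nitsche duality argument as the paper: the same dual problem and regularity bound (\ref{eq34}), the same element-wise integration by parts with vanishing flux jumps of $\phi$, the same subtraction of Galerkin orthogonality with a continuous interpolant $\tilde\phi$, the same splitting into $A_1+A_2$ as in (\ref{eq36}), and the same final combination with Theorem \ref{thm10}. The only cosmetic difference is that you invoke Lemma \ref{lem4} (continuity in the $|||\cdot|||_{H^2(P_h)}$ norm) to bound $A_2$, whereas the paper uses the continuity of $B$ with respect to $|||\cdot|||$ taken from its companion reference; both routes produce the same $Ch\|\phi\|_{H^2(\Omega)}$ factor and the same final rate.
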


\begin{remark}
For the results of Theorem \ref{thm12}, we can find that for the
stabilization term of order $O(h/p^2)$ and for $p \geq2$, the
convergence rates are of order $\mu$ and $\gamma-2$ for $h$ and
$p$ convergence, respectively.
\end{remark}

\section{Numerical results}
For the 2D example problem, we consider the following VBVP, given on
the unit square $\Omega = (0,1)\times(0,1)$ with prescribed Dirichlet
boundary conditions on $\partial \Omega$,
\begin{eqnarray}
\begin{split}
-\nabla \cdot (K(x,y) \nabla u) + u &= f,\ \textrm{in}\ \Omega,\\
u &= 0,\ \textrm{on}\ \partial \Omega.
\end{split}
\end{eqnarray}
Here we take $K(x,y) = xy$, and the exact solution to this problem we
choose is
\begin{eqnarray}\label{eq35}
u(x,y) = xy(1-x)(1-y).
\end{eqnarray}
In the convergence analyses performed here, the orders of the norm
and stabilization parameters are set equal to $O(h)$, i.e.,
taking $\lambda=\nu=1$. In Figure \ref{fig111}, the results are shown
for the approximation error in the $L^2(\Omega)$ norm.
Figure \ref{fig222} shows the convergence rates with respect to the $L^2(\Omega)$ norm. And the rule of obtaining the rates is defined as
\begin{eqnarray}
\beta_h = \frac{\log(e^i_h/e^{i+1}_h)}{\log 2}.
\end{eqnarray}
Figure \ref{fig333} and \ref{fig444} show the exact solution (\ref{eq35})
and its contour figure, respectively. At last, the numerical solution is
shown by using our method in Figure \ref{fig555}.

\begin{figure}[H]
\centering
\includegraphics[width=10cm]{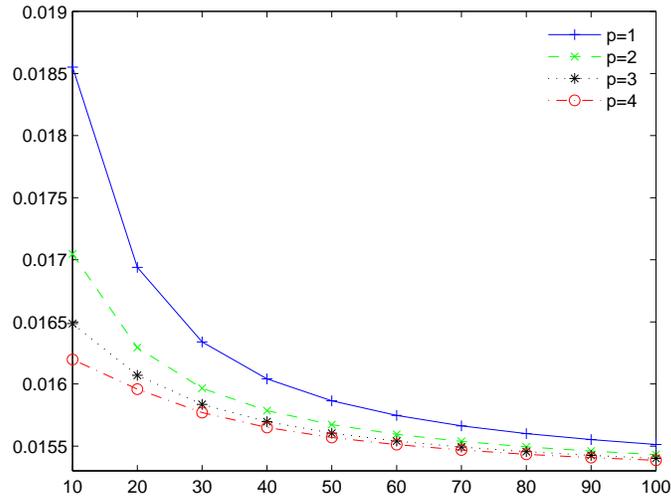}
\caption{$L^2$ norm error for p taking 1,2,3 and 4.}\label{fig111}
\end{figure}
\begin{figure}[H]
\centering
\includegraphics[width=10cm]{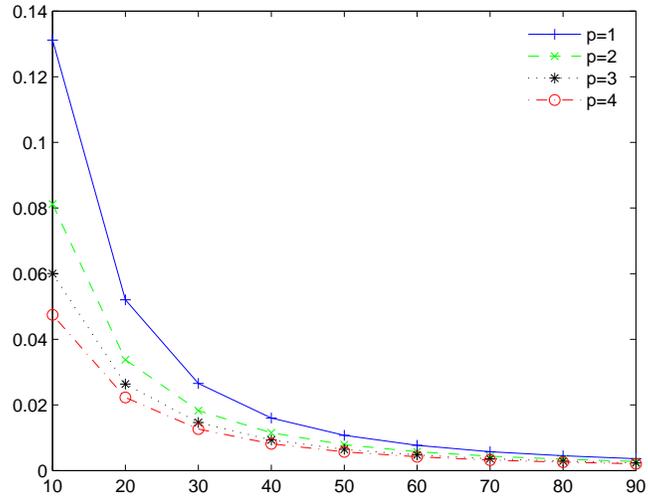}
\caption{Convergence rate $L^2$ norm for p taking 1,2,3 and 4.}\label{fig222}
\end{figure}
\begin{figure}[H]
\centering
\includegraphics[width=10cm]{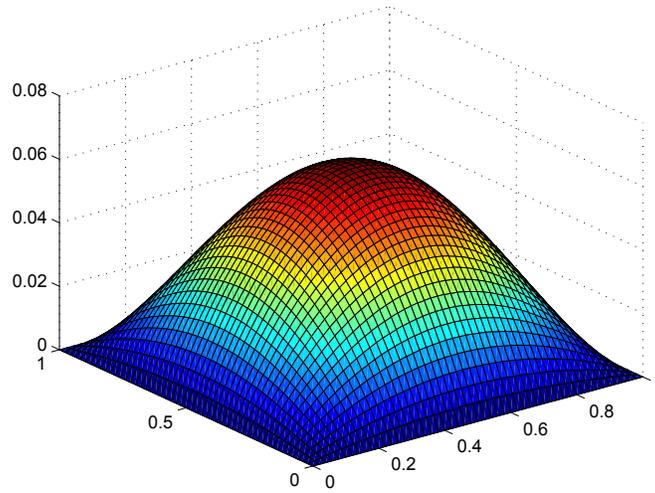}
\caption{Exact solution.}\label{fig333}
\end{figure}
\begin{figure}[H]
\centering
\includegraphics[width=10cm]{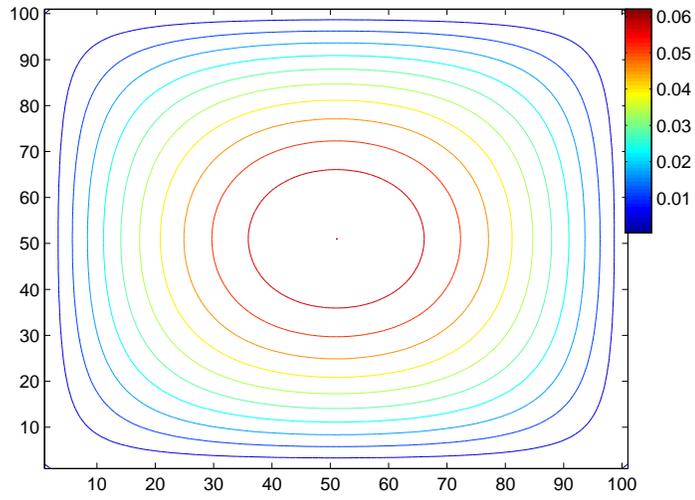}
\caption{Contour figure of exact solution.}\label{fig444}
\end{figure}
\begin{figure}[H]
\centering
\includegraphics[width=9cm]{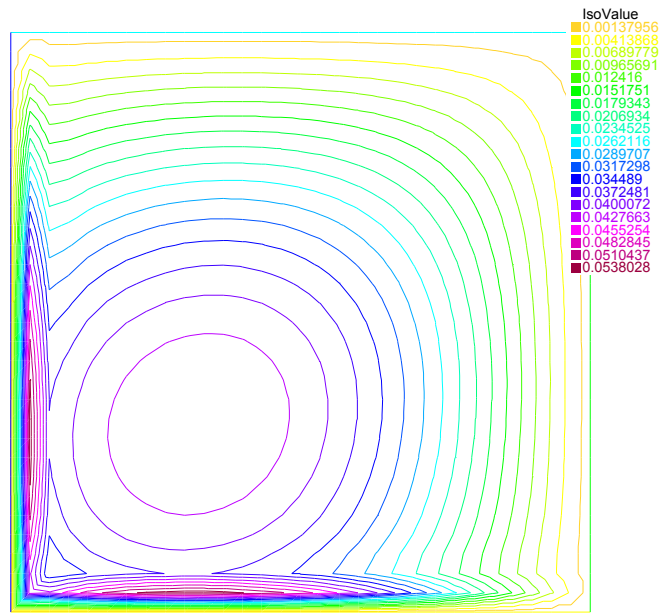}
\caption{Contour figure of DG solution.}\label{fig555}
\end{figure}

\section{Conclusion}
We introduce a new DG formulation and analyse the case of
a two-dimensional reaction-diffusion problem with Dirichlet
boundary conditions. The method is similar to the general
DG method of \cite{[13]I.BabuskaandC.Baumann, [1]J.T.Oden},
but involves an extra stabilization term on the jumps of the
fluxes across the element interfaces. In the work, we apply the
conforming mesh as Figure \ref{figdoc666} shows, but generally
we can constrct elements as the Figure \ref{fig000}
shows, and the elements are even star-shaped as $E_i$. Appliations
of this type of element one can find in \cite{[3]V.Dolejsi}.
\begin{figure}[H]
\centering
\includegraphics[width=8cm]{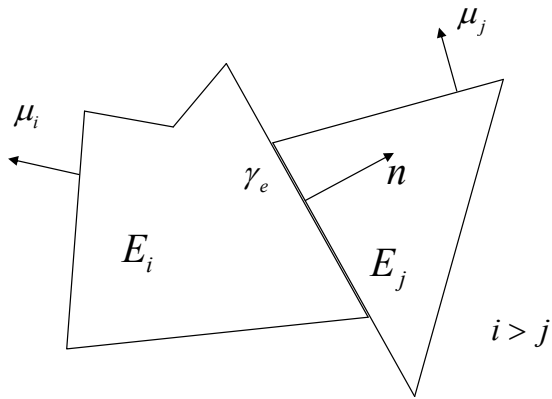}
\caption{More general geometrical elements.}\label{fig000}
\end{figure}

In addation, a new space setting is introduced. Instead of choosing the conventional
$H^2(P_h)$, which is predominantly used in discontinuous Galerkin methods, we relax the constrains on the space and choose functions that are locally in $H(\Delta,K)$ and whose jumps in the fluxes
across the element interfaces are in $L^2(\Gamma_{\rm int})$.


\end{document}